\title{Further results on consensus formation\\
       in the Deffuant model}
\author{Olle Häggström \thanks{Research supported by grants
 from the Swedish Research Council and from the Knut and Alice
 Wallenberg Foundation}
\qquad Timo Hirscher
     \thanks{Research supported by a grant from the Swedish Research Council}
\\\normalsize Chalmers University of Technology}
\theoremstyle{break}
\newtheorem{theorem}{Theorem}[section]
\newtheorem{lemma}{Lemma}[section]
\newtheorem{proposition}{Proposition}[section]
\newtheorem{definition}{Definition}
\newtheorem{remark}{Remark}[section]
\newtheorem*{example}{Example}
\newtheorem*{examples}{Examples}
\let\c@proposition\c@theorem
\let\c@lemma\c@theorem
\let\c@corollary\c@theorem
\newenvironment{proof}{\noindent{\sc Proof:}}{\vspace{-0.5cm}~\hfill $\square$\vspace{0.5cm}}
\newenvironment{nproof}[1]{\noindent{\sc Proof #1:}}{\vspace{-1em}~\hfill $\square$\vspace{2em}}
\newcommand\N{\mathbb{N}}
\newcommand\R{\mathbb{R}}
\newcommand\Z{\mathbb{Z}}
\newcommand\E{\mathbb{E}\,}
\newcommand\Prob{\mathbb{P}}
\renewcommand\epsilon{\varepsilon}
\renewcommand\phi{\varphi}
\DeclareMathOperator\supp{supp}
\newcommand\En{\mathcal{E}}
\def\law{\buildrel d \over\rightarrow}
\definecolor{darkblue}{rgb}{0,0,.5}
\begin{document}
\newpage
\maketitle
\begin{abstract}
    The so-called Deffuant model describes a pattern for social interaction,
    in which two neighboring individuals randomly meet and share their opinions
    on a certain topic, if their discrepancy is not beyond a given threshold $\theta$.
    The major focus of the analyses, both theoretical and based on simulations, lies on
    whether these single interactions lead to a global consensus in the long run or not.
    First, we generalize a result of Lanchier for the Deffuant model on $\Z$, determining
    the critical value for $\theta$ at which a phase transition of the long term
    behavior takes place, to other distributions of the initial opinions than i.i.d.\
    uniform on $[0,1]$. Then we shed light on the situations where the underlying
    line graph $\Z$ is replaced by higher-dimensional lattices $\Z^d,\ d\geq2$, or the
    infinite cluster of supercritical i.i.d.\ bond percolation on these lattices.
\end{abstract}



\section{Introduction}

Let $G=(V,E)$ be a simple graph, i.e.\ having undirected edges and neither loops nor multiple edges. The considered
graph may either be finite or infinite with bounded maximal degree. Furthermore, without loss
of generality we can assume $G$ to be connected, since in what follows one could consider the connected
components seperately otherwise. Every vertex is understood to represent an individual and will at each time
$t\geq0$ be assigned a value representing its opinion.
All the edges in $E$ are connections between individuals allowing for mutual influence.
There are a number of models for what is called {\itshape opinion dynamics}, which
are qualitatively different but share similar ideas, see \cite{Survey} for an extensive
survey.\vspace{0.8em}

\noindent The {\em Deffuant model} (introduced by Deffuant et al.\ \cite{Model})
featuring two model parameters $\mu\in(0,\tfrac 12]$ and
$\theta\in(0,\infty)$ is defined as follows. At time $t=0$, the vertices are assigned i.i.d.\ initial
opinions, in the standard case uniformly distributed on the interval $[0,1]$.
In addition, serving as a regime for the random encounters, every edge $e\in E$ is
assigned a unit rate Poisson process.
The latter are independent of each other and the initial distribution of opinion values.
Denote the opinion value at $v\in V$ at time $t$ by $\eta_t(v)$, which remains
unchanged until at some time $t$ a Poisson event occurs at an edges incident to $v$,
say $e=\langle u,v\rangle$. The opinion values of $u$ and $v$ just before this happens may be
$\eta_{t-}(u)=\lim_{s\uparrow t}\eta_s(u)=:a$ and $\eta_{t-}(v)=\lim_{s\uparrow t}\eta_s(v)=:b$ respectively.

If these values are within the confidence bound $\theta$, they come symmetrically closer to each
other, if not they stay unchanged, i.e.
 \begin{equation*}
         \eta_t(u) = \left\{ \begin{array}{ll}
                          a+\mu(b-a) & \mbox{if $|a-b|\leq\theta$,} \\
                          a & \mbox{otherwise}
                         \end{array} \right.
 \end{equation*}                     
and similarly \vspace{-0.75cm}\begin{align}\label{dynamics}\end{align}\vspace{-0.75cm}

 \begin{equation*}
         \eta_t(v) = \left\{ \begin{array}{ll}
                          b+\mu(a-b) & \mbox{if $|a-b|\leq\theta$,} \\
                          b & \mbox{otherwise.}
                         \end{array} \right.
 \end{equation*}
Observe that $\mu$ is modelling the willingness of the individuals to step towards
other opinions encountered that fall within their interval of tolerance, shaped by $\theta$. In other
words, a value of $\mu$ close to $0$ represents a strong reluctance to change one's mind.
For the process to be well-defined, on the one hand one has to make sure that neither two Poisson
events occur simultaneously nor that there is a limit point in time for the events occuring on edges
incident to one fixed vertex. But since the maximal degree is bounded and we assume the vertex
set to be countable, this is almost surely the case. On the other hand, there is a more subtle
issue in how the simple interactions shape transitions of the whole system on an infinite graph
-- is it well-defined there as well?
For infinite graphs with bounded degree, this problem is settled by standard techniques
in the theory of interacting particle systems, see Thm.\ 3.9 on p.\ 27 in \cite{Liggett}.
\vspace*{1em}

\noindent The most natural question to ask seems to be, if the individual opinions will converge to
a common consensus in the long run or if they are going to be split up into groups of
individuals holding different opinions. In this regard let us define the following types of scenarios
for the asymptotic behavior of the Deffuant model on a connected graph as $t\rightarrow\infty$:

\begin{definition}
\begin{enumerate}[(i)]
\item {\itshape No consensus}\\
There will be finally blocked edges, i.e.\ edges $e=\langle u,v\rangle$ s.t.
$$|\eta_t(u)-\eta_t(v)|>\theta,$$
for all times $t$ large enough. Hence the vertices fall into different opinion groups.
\item {\itshape Weak consensus}\\
Every pair of neighbors $\{u,v\}$ will finally concur, i.e.
$$\lim_{t\to\infty}|\eta_t(u)-\eta_t(v)|=0.$$
\item {\itshape Strong consensus}\\
The value at every vertex converges, as $t\to\infty$, to a common limit $l$, where
$$l=\begin{cases}\text{the average of the initial opinion values},&\text{if }G\text{ is finite}\\
                 \E\eta_0,&\text{if }G\text{ is infinite.}\end{cases}$$

\end{enumerate}\vspace*{1em}
\noindent Let the scenario in which we have weak consensus, but at some vertices $v$
the value $\eta_t(v)$ is not converging be called {\itshape strictly weak
consensus}. Whether strictly weak consensus can actually occur (for some graphs and some initial
distributions) is an open problem.
\end{definition}

\noindent On finite graphs, strictly weak consensus is impossible as the opinion average is preserved
over time and in general the answer to the question whether we get consensus in the long run or not
clearly depends on the initial setting. With independent initial opinions distributed uniformly on $[0,1]$
even for values of $\theta$ close to but smaller than $1$ consensus might be prevented,
albeit with a small probability, e.g.\ when we get stuck right from the beginning with all the
opinions being close to either $0$ or $1$ leaving a gap larger than $\theta$ in between, preventing
any two individuals situated at different ends of the opinion range from compromising.
In the interdisciplinary area labelled ``sociophysics'' some work has been done in simulating
the long-term behavior of this model on various types of finite graphs, such as in
\cite{phys}.

On infinite regular lattices however, the picture is different and the minimal example almost
settled. For the graph on $\Z$ in which consecutive integers are joined by edges, Lanchier
\cite{Lanchier} showed for the standard case with i.i.d.\ {\upshape unif}$([0,1])$ distributed initial
values that regardless of $\mu$, which is just controlling the speed of convergence, the threshold
between no consensus and consensus $\theta_\text{c}$ is $\frac12$, which is the essence of
Theorem \ref{on Z}.

In this paper, we investigate what happens when this basic setting is generalized, in
two different directions. In Section 2 we stay on the one-dimensional lattice, i.e.\ the line
graph on $\Z$, but allow for more general initial distributions and are able to settle most
but not all cases of i.i.d.\ initial configurations (see Theorem \ref{gen}). We also generalize the
model slightly to allow for dependent initial opinions given by stationary ergodic sequences that satisfy
the so-called {\em finite energy condition}, known from percolation theory. (The generalization of the
Deffuant model to multivariate opinions can be found in the upcoming paper \cite{Deffuant2}.)

In Section 3, $\Z$ is replaced by the general regular lattice $\Z^d$. For $d\geq2$ most of the
techniques developed for the one-dimensional case $\Z$ break down, but we are at least able to show that
there won't be disagreement for a sufficiently large confidence bound, larger than $\tfrac34$ in the standard
i.i.d.\ uniform case (see Theorem \ref{Zd}). Furthermore, the arguments used transfer with only minor
changes to the more general case of an infinite, locally finite, transitive and amenable graph
(see Remark \ref{amenable}).

Finally, in the last section we consider the Deffuant model on the random subgraph of $\Z^d$ given by
supercritical i.i.d.\ bond percolation independent of the random variables driving the opinion dynamics,
i.e.\ the initial configuration and the Poisson processes. Besides an extension of the result we
derived for the full grid to this setting (Theorem \ref{perc1}), a lower bound for values of $\theta$ allowing
for strong consensus on the infinite component is established (Theorem \ref{perc2}).

We find it slightly surprising that we can prove this last result for supercritical percolation (with $p<1$)
but not for the full lattice. The more common situation for random processes living on supercritical percolation
clusters is that these are easier to handle on the full lattice.

\section{Generalized initial configurations on \texorpdfstring{$\Z$}{\bf Z}}\label{sec2}
\subsection{Independent and identically distributed initial opinion values}\label{1d}

\begin{theorem}[\bf Lanchier]\label{on Z}
    Consider the Deffuant model on the graph $(\Z,E)$, where $E=\{\langle v,v+1\rangle, v\in\Z\}$
    with i.i.d.\ {\upshape unif}$([0,1])$ initial configuration and fixed $\mu\in(0,\tfrac12]$.
    \begin{enumerate}[(i)]
    \item If $\theta>\tfrac12$, the model converges almost surely to strong consensus, i.e. with
    probability $1$ we have: $\lim_{t\to\infty}\eta_t(v)=\tfrac12$ for all $v\in\Z$.
    \item If $\theta<\tfrac12$ however, the integers a.s.\ split into (infinitely many) finite clusters
    of neighboring individuals asymptotically agreeing with one another, but no global consensus is approached.
    \end{enumerate}
\end{theorem}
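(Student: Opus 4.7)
The plan is to treat the two regimes $\theta<\tfrac12$ and $\theta>\tfrac12$ separately, the subcritical case (ii) being conceptually the easier of the two.

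\medskip

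\textbf{Part (ii), no consensus for $\theta<\tfrac12$.} The plan is to exhibit infinitely many finally blocked edges. Pick $\alpha,\epsilon>0$ with $2\alpha+\theta+\epsilon<1$. For $L$ to be chosen large, call a pattern centred at position $v$ a \emph{barrier} if all initial opinions on $[v-L,v]$ lie in $[0,\alpha]$ while those on $[v+1,v+L+1]$ lie in $[1-\alpha,1]$. Under i.i.d.\ $\mathrm{unif}([0,1])$, such a barrier has positive probability and, by Borel--Cantelli over disjoint positions, occurs infinitely often almost surely. A careful block analysis---using that each Deffuant update is a convex combination, and that the squared-difference energy along a long block limits the amount of opinion mass that can leak inward through its sides---shows that for $L$ large enough, the opinions at $v$ and $v+1$ remain in $[0,\alpha+\epsilon]$ and $[1-\alpha-\epsilon,1]$ respectively for all time, so the boundary edge $\langle v,v+1\rangle$ is finally blocked. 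Hence $\Z$ splits into almost surely infinitely many finite clusters and no global consensus is possible.

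\medskip

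\textbf{Part (i), strong consensus for $\theta>\tfrac12$.} I would proceed in two stages.

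\emph{Stage 1 (weak consensus).} Suppose for contradiction that $p:=\Prob(\text{edge }\langle 0,1\rangle\text{ finally blocked})>0$. By translation invariance and ergodicity of the initial configuration together with the Poisson events, finally blocked edges form a stationary set of density $p>0$, partitioning $\Z$ into random clusters whose internal opinions admit weak consensus (no edge is finally blocked inside a cluster, so infinitely many updates occur on each interior edge and their difference decays to zero) and whose common limit equals the cluster's preserved average. Over a large window $[-N,N]$, the law of large numbers gives that the initial mean is $\tfrac12+o(1)$, whereas finally blocked edges impose asymptotic gaps exceeding $\theta>\tfrac12$ between neighboring cluster averages. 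A careful analysis of the induced sign pattern of cluster-mean deviations from $\tfrac12$, which is the technical heart of Lanchier's argument, shows that such a configuration is incompatible with the global mean for large $N$, yielding the desired contradiction.

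\emph{Stage 2 (identification of the limit).} Once weak consensus holds, an energy estimate (the sum $\sum_{e}(\eta_t(u)-\eta_t(v))^2$ is non-increasing and, together with weak consensus, forces the total variation of each single trajectory to be finite) implies that each trajectory $t\mapsto\eta_t(v)$ is Cauchy, so $\eta_\infty(v):=\lim_{t\to\infty}\eta_t(v)$ exists. The common limit is a shift-invariant measurable function of the driving randomness, hence constant almost surely by ergodicity; mass conservation on $[-N,N]$ combined with the law of large numbers then forces the constant to equal $\E\eta_0=\tfrac12$.

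\medskip

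The principal obstacle is Stage~1 of part (i): the Deffuant dynamics are non-monotone and admit no obvious Lyapunov function, so one has to exploit the excess $\theta-\tfrac12>0$ through a delicate block construction. I would follow Lanchier's original strategy closely here. Both arguments are insensitive to the choice of $\mu\in(0,\tfrac12]$, consistent with the theorem's statement.
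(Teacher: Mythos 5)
This theorem is quoted from Lanchier and H\"aggstr\"om rather than proved in the paper, but your sketch diverges from the known proofs at exactly the points where the real work lies, and the replacements you propose do not hold up. For part (ii), a finite barrier of length $L$ does not force the edge $\langle v,v+1\rangle$ to stay blocked: an energy argument cannot bound the opinion mass leaking into the block, because an update across an opinion gap $d$ transports mass $\mu d$ while dissipating only $2\mu(1-\mu)d^2$, so the mass moved per unit of dissipated energy grows like $1/d$; a gentle ``ramp'' of initial values to the left of $[v-L,v]$ can therefore feed an arbitrarily large amount of mass into the block through a chain of small, unblocked gaps and lift $\eta_t(v)$ far above $\alpha+\epsilon$. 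This is precisely why the known proofs control \emph{all} one-sided partial averages out to infinity ($\epsilon$-flatness of $v-1$ to the left and of $v+1$ to the right, plus a single outlier at $v$), an event of positive probability by an SLLN plus local-modification argument; no finite window suffices.

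For part (i), Stage 1 as described cannot close: if finally blocked edges had positive density, the limiting cluster values $c_i\in[0,1]$ would satisfy $|c_{i+1}-c_i|>\theta>\tfrac12$ and hence alternate about $\tfrac12$ --- but an alternating pattern such as $0,1,0,1,\dots$ with comparable cluster sizes is perfectly compatible with a spatial mean of $\tfrac12$, so no contradiction with the law of large numbers arises. The actual argument runs through the SAD (``sharing a drink'') representation: a two-sidedly $\epsilon$-flat vertex exists with positive probability and its value stays within $6\epsilon$ of $\tfrac12$ for \emph{all} time, so for $\epsilon$ small no incident edge can ever be finally blocked when $\theta>\tfrac12$ (the other endpoint would have to leave $[0,1]$); this simultaneously rules out blocked edges and, propagated outward, yields convergence of every $\eta_t(v)$ to $\tfrac12$. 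Your Stage 2 is also unsound on its own terms: finite dissipated energy $\sum_k 2\mu(1-\mu)d_k^2$ does not imply finite total variation $\sum_k \mu d_k$ of a trajectory, so weak consensus plus an energy bound does not give convergence of individual opinions --- indeed the paper only obtains weak consensus on $\Z^d$, $d\ge 2$, for essentially this reason. The strong convergence on $\Z$ again comes from the flat-point bound, not from a Cauchy estimate.
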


\noindent
For the line graph, the critical value $\theta_\text{c}$ equals thus $\frac12$, but what happens at
criticality is still an open question. Lanchier's result was reproven by Häggström using somewhat more
basic techniques (see \cite{ShareDrink}, Thm.\ 6.5 and Thm.\ 5.2).

It turns out that the methods in \cite{ShareDrink} can be adapted to i.i.d.\ initial distributions beyond
the {\upshape unif}$([0,1])$ case. In the following theorem, we determine $\theta_\text{c}$ in all cases except
when the distribution's positive and negative parts both have infinite expectation (this case remains unsolved).
Upon completing this work, we learned that a similar extension was simultaneously and independently done by Shang
\cite{Shang}. Part (a) of our Theorem \ref{gen} conflicts with Thm.\ 1 in \cite{Shang}, the discrepancy
being due to Shang overlooking the crucial effect that gaps in the support of the distribution of $\eta_0$ have, if
they are large.

\begin{theorem}\label{gen}
Consider the Deffuant model on $\Z$ as described earlier with the only exception that the
initial opinions are not necessarily distributed uniformly on $[0,1]$ (but still i.i.d.).
\begin{enumerate}[(a)]
   \item Suppose the initial opinion of all the agents follows an arbitrary bounded distribution $\mathcal{L}(\eta_0)$
         with expected value $\E\eta_0$ and $[a,b]$ being the smallest closed interval containing its support.
         If $\E\eta_0$ does not lie in the support, there exists some maximal, open interval $I\subset[a,b]$ such that
         $\E\eta_0$ lies in $I$ and $\Prob(\eta_0\in I)=0$. In this case let $h$ denote the length of $I$, otherwise 
         set $h=0$.
         
         Then the critical value for $\theta$, where a phase transition from a.s.\ no consensus to a.s.\ strong
         consensus takes place, becomes $\theta_\text{\upshape c}=\max\{\E\eta_0-a,b-\E\eta_0,h\}$.
         The limit value in the supercritical regime is $\E\eta_0$.
   \item Suppose the initial opinions' distribution is unbounded but its expected value exists, either in the 
         strong sense, i.e.\ $\E\eta_0\in\R$, or the weak sense, i.e.\ $\E\eta_0\in\{-\infty,+\infty\}$.
         Then the Deffuant model with arbitrary fixed parameter $\theta\in(0,\infty)$ will a.s.\ behave
         subcritically, meaning that no consensus will be approached in the long run.
\end{enumerate}
\end{theorem}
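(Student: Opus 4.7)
The plan is to adapt the techniques Häggström developed in \cite{ShareDrink} for the uniform case to these more general initial laws. For part (a) I would split into a subcritical regime (where $\theta<\theta_{\text{c}}$ and no consensus should hold) and a supercritical regime (where $\theta>\theta_{\text{c}}$ and strong consensus at $\E\eta_0$ should hold); for part (b) I would argue by a combination of a truncation-type reduction to part (a) and a direct local-isolation argument.

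For the subcritical direction of (a), three sub-cases match the three terms in $\max\{\E\eta_0-a,\,b-\E\eta_0,\,h\}$. The case $h>\theta$ is the cleanest: partition the vertices into a \emph{left} class $L=\{v:\eta_0(v)\leq\inf I\}$ and a \emph{right} class $R=\{v:\eta_0(v)\geq\sup I\}$. Convex combinations preserve each class, so this partition is dynamically invariant, and every $L$--$R$ edge has initial opinion gap at least $h>\theta$ and is therefore blocked forever. Ergodicity of the i.i.d.\ initial configuration yields infinitely many such edges almost surely. The cases $\E\eta_0-a>\theta$ and the symmetric $b-\E\eta_0>\theta$ require the flow/energy obstruction of Thm.\ 5.2 in \cite{ShareDrink}: assuming strong consensus at some limit $\ell$, vertices with $\eta_0$ within $\epsilon$ of $a$ (positive density by ergodicity) would need their opinions to rise by at least $\E\eta_0-a-\epsilon>\theta$, while the cumulative opinion flux across any single incident edge is bounded by a constant multiple of $\theta$ per interaction, which, aggregated over a suitable block, contradicts the required net inflow.

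For the supercritical direction of (a), since $\theta$ strictly dominates both $\E\eta_0-a$ and $b-\E\eta_0$, any opinion lying in a small enough neighborhood of $\E\eta_0$ is within $\theta$ of every value in $[a,b]$ and can therefore mediate interactions between any two neighbors. If $h=0$ such \emph{universal mediators} already occur with positive density at time zero by the ergodic theorem; if $h\in(0,\theta)$, finitely many gap-crossing Poisson events create them. I would then invoke the flat-point construction of Thm.\ 6.5 in \cite{ShareDrink} to carve the line into blocks bounded by consecutive mediators, show that each block achieves internal consensus, and identify the common limit as $\E\eta_0$ via the LLN together with the sum-conservation identity for the Deffuant dynamics on each block.

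For part (b), I would exploit the fact that unboundedness of the support forces the critical threshold of any truncation to part (a) to grow without bound. Concretely, for fixed $\theta<\infty$, choose $N$ so large that a pattern in which a vertex satisfies $\eta_0(v)>N$ while its two neighbors satisfy $\eta_0(v\pm 1)<\E\eta_0+\theta$ (or an analogous pattern using the divergent tail when $\E\eta_0\in\{-\infty,+\infty\}$) has positive probability. Around the central vertex I would build a \emph{protective envelope} of sufficiently many further sites whose initial values pin down a range that prevents the neighbors' opinions from ever rising into the interaction window of $v$; this is a finite local configuration of positive probability, so by the ergodic theorem it occurs at infinitely many sites a.s., producing infinitely many eternally blocked edges. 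The main obstacle, both here and in the $\E\eta_0-a>\theta$ case of (a), will be verifying that local isolation persists forever rather than just initially, since distant interactions can drag neighbors' opinions through long chains; I would address this by choosing the envelope wide enough (on the order of $N/\theta$ sites on each side) to absorb the maximum possible inward drift, and by using a finite-energy type argument to control its probability under the i.i.d.\ measure.
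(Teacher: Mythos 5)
Your high-level strategy (adapt \cite{ShareDrink}; treat the gap, the distance to the extremes, and the unbounded case separately) matches the paper's, and your treatment of the case $h>\theta$ via the dynamically invariant partition into sites below and above the gap is exactly right. But several of your remaining arguments rely on finite or per-interaction bounds where the true mechanism requires control over an entire half-line, and this is a genuine gap. For the subcritical case $\E\eta_0-a>\theta$, the ``cumulative flux bounded by a multiple of $\theta$ per interaction'' gives nothing, since each edge experiences infinitely many Poisson events over infinite time; the paper's argument (following Prop.\ 5.1 of \cite{ShareDrink}) is instead that an outlier at $v$ flanked by a site $v-1$ that is $\epsilon$-flat to the left and a site $v+1$ that is $\epsilon$-flat to the right never interacts at all, because the SAD representation forces $\eta_t(v\pm1)$ to remain in a small window around the mean for \emph{all} time --- a statement about all partial averages along the two half-lines, not about any finite block. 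The same objection defeats your ``protective envelope of order $N/\theta$ sites'' in part (b): over infinite time influence propagates arbitrarily far, so no finite envelope can pin down the neighbours' opinions; the paper again uses flatness (for finite mean, the SLLN gives \emph{some} $r>0$ such that all partial averages stay in $[\E\eta_0-r,\E\eta_0+r]$ with positive probability, and unboundedness supplies an outlier beyond $\E\eta_0\pm(r+\theta)$; for $\E\eta_0=+\infty$ a one-sided variant with stochastic domination keeps $\eta_t(v+1)>\theta$ forever while $\eta_0(v)<0$).

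The supercritical direction with a gap $h\in(0,\theta)$ hides the paper's main new technical content, which your ``finitely many gap-crossing Poisson events create universal mediators'' does not capture. When $\Prob(\eta_0\in(\tfrac12-\epsilon,\tfrac12+\epsilon))=0$, no site is $\epsilon$-flat at time $0$ (take $n=0$ in the definition of flatness), and the existence of a mediator value at one instant is useless because that value changes as soon as the mediator interacts; moreover ``blocks bounded by consecutive mediators'' are not dynamically isolated, so internal block consensus does not follow. What is actually needed is positive probability of (two-sided) $\epsilon$-flatness in the configuration at a fixed positive time $t$: the paper builds this from a block of $k(m+n)$ initial values drawn from small intervals just below and just above the gap, in proportions making the average close to $\tfrac12$, homogenized by sufficiently many Poisson events and sealed off by two edges carrying no Poisson event up to time $t$, together with a Borel--Cantelli argument over Poisson-free edges showing that all long partial averages at time $t$ still converge to $\tfrac12$. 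Only with such time-$t$ flat points in hand (and the observation that the time-$t$ configuration is still translation invariant) can one rerun the Section 6 machinery of \cite{ShareDrink} to conclude strong consensus with limit $\E\eta_0$.
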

  \noindent
   Before embarking on the proof of this generalized result, let us recall some key ingredients of the
   proof for the standard uniform case in \cite{ShareDrink}.
   The arguably most central among these is the idea of {\em flat points}. A vertex $v\in\Z$ is called $\epsilon$
   {\itshape -flat to the right} in the initial configuration $\{\eta_0(u)\}_{u\in\Z}$ if for all $n\geq0$:
   \begin{equation}\label{rflat}
     \frac{1}{n+1}\sum_{u=v}^{v+n}\eta_0(u)\in\left[\tfrac12-\epsilon,\tfrac12+\epsilon\right].
   \end{equation}
   It is called $\epsilon${\itshape-flat to the left} if the above condition is met with the sum running
   from $v-n$ to $v$ instead. Finally, $v$ is called {\itshape two-sidedly $\epsilon$-flat} if for all $m,n\geq0$
   \begin{equation}\label{tflat}
     \frac{1}{m+n+1}\sum_{u=v-m}^{v+n}\eta_0(u)\in\left[\tfrac12-\epsilon,\tfrac12+\epsilon\right].
   \end{equation}
   In order to grasp the crucial role of flat points another concept has to be mentioned, namely the representation
   of $\eta_t(v)$ as a weighted average of initial opinions (see La.\ 3.1 in \cite{ShareDrink}).
   This convex combination of initial opinions can be written in a neat form, using as a tool
   the non-random pairwise averaging procedure Häggström called {\em Sharing a drink} (SAD) in \cite{ShareDrink}.
   In the latter, one has an initial profile $\{\xi_0(v)\}_{v\in\Z}$, with $\xi_0(0)=1$ and $\xi_0(v)=0$ for all
   $v\neq0$, symbolizing a full glass of water at site $0$ and empty ones at all other sites. The averaging is now
   done as in (\ref{dynamics}) but without the threshold $\theta$ and the encounters are no longer random, but
   given by a sequence of edges. Elements of $[0,1]^\Z$ that can be obtained by a finite such sequence are called
   SAD-profiles. An appropriately tailored SAD-procedure will then mimick the dynamics of the corresponding 
   Deffuant model backwards in time in such a way that the state $\eta_t(0)$ in the Deffuant model at any given
   time $t>0$ can be written as a weighted average of states at time $0$ with weights given by an SAD-profile.
   In \cite{ShareDrink}, general properties of SAD-profiles and consequences for $\eta_t(0)$ are derived.
   For example, the opinion value at a vertex which is two-sidedly $\epsilon$-flat in the initial configuration can
   throughout time not move further away than $7\epsilon$ from its initial value (see La.\ 6.3 in \cite{ShareDrink}).
   
\vspace*{1em}
\begin{nproof}{of Theorem \ref{gen}}
\begin{enumerate}[(a)]
   \item The proof of this part will be subdivided into three steps marked by (i), (ii) and (iii).
     \begin{enumerate}
         \item[(i)]
         At first, let us suppose that the initial opinions are distributed on $[0,1]$ according to $\mathcal{L}(\eta_0)$
         having expected value $\E\eta_0=\tfrac12$ and mass around the expectation as well as
         at least one of the extremes, i.e.\ for all $\epsilon>0$ we have
         $$\Prob\left(\eta_0<\epsilon\ \text{ or }\eta_0>1-\epsilon\right)>0,\  
         \Prob\left(\tfrac12-\epsilon\leq\eta_0\leq\tfrac12+\epsilon\right)>0.$$
         Then we claim that the result of Theorem \ref{on Z} still holds true.
     \end{enumerate}  
         To prove this generalization of the standard uniform case is in fact to check that the crucial conditions in
         Häggström's \cite{ShareDrink} proof are met.
         First of all, the i.i.d.\ property guarantees that the distribution of the initial configuration
         is translation invariant, hence both the left- and right-shift of the system ($v\mapsto v-1\ \forall\,v\in\Z$
         and $v\mapsto v+1\ \forall\,v\in\Z$ respectively) are measure-preserving.

         The proof of La.\ 4.2 in \cite{ShareDrink} showing that $\Prob(v \text{ is }\epsilon\text{-flat to the right})>0$
         for every $\epsilon>0$ and $v\in\Z$ only uses the Strong Law of Large Numbers (SLLN), local modification
         (which employs that $\Prob\left(\tfrac12-\epsilon\leq\eta_0(v)\leq\tfrac12+\epsilon\right)>0$
         for all $\epsilon>0$, which we assumed) as well as $\E\eta_0=\tfrac12$.
      
         By symmetry the same is true for $\epsilon$-flatness to the left and the additional assumption that 
         $\Prob(\eta_0\notin[\epsilon,1-\epsilon])>0$ provides the missing ingredient to mimick Prop.\ 5.1 and 
         Thm.\ 5.2 in \cite{ShareDrink} verbatim: If $\theta<\tfrac12$, pick $\epsilon>0$ small enough such that 
         $\theta\leq\tfrac12-2\epsilon$.
         With positive probability any given site $v$ is prevented from ever compromising with its neighbors already
         by the initial configuration, namely if $v-1$ is $\epsilon$-flat to the left, $v+1$ $\epsilon$-flat to the
         right and $v$ itself an outlier in the sense that $\eta_0(v)\notin[\epsilon,1-\epsilon]$.
         This establishes the subcritical case {\itshape(i)} in Theorem \ref{on Z}.
      
         To show $\Prob(v \text{ is two-sidedly }\epsilon\text{-flat})>0$ for all $v\in\Z,\ \epsilon>0$ (in La.\ 4.3
         in \cite{ShareDrink})
         it is used once more that $\Prob\left(\tfrac12-\epsilon\leq\eta_0\leq\tfrac12+\epsilon\right)>0$. 
         Following the reasoning of Sect.\ 6 in \cite{ShareDrink} literally will settle the supercritical
         case. The only change that has to be made in order to adapt to the generalized setting is that the expected energy
         at time $t=0$, i.e.\ $\E(\eta_0(v)^{2})\in(0,1]$ in La.\ 6.2, is no longer $\tfrac13$ as for the uniform
         distribution. This minor change is not crucial however, since only the value's finiteness is used in the proof
         of Prop.\ 6.1.
     \begin{enumerate}  
         \item[(ii)] 
         Now suppose the initial distribution is as in (i), but fails to have mass around the expectation $\tfrac12$ and
         leaves a gap of width $h\in(0,1]$, i.e.\ there exists some maximal (open) interval $I\subset[0,1]$ of length $h$
         such that $\tfrac12$ lies in $I$ and $\Prob(\eta_0\in I)=0$. Then we claim that the critical value becomes
         $\theta_\text{\upshape c}=\max\{\tfrac12,h\}$.
     \end{enumerate}
      Changing the assumptions concerning the initial distribution of opinions as in (ii) will affect
      both the sub- and supercritical case as outlined in step (i). Clearly, the limiting behavior a.s.\ cannot
      be consensus for $\theta<h$ due to the fact that with probability $1$ we will have initial opinion values
      both below and above $\frac12$. Since an update, according to (\ref{dynamics}), can only take place between
      neighbors that are either both below or both above $\tfrac12$, sites with initial values above the gap $I$
      will throughout time stay above it and the same holds for initial values below the gap. In particular, edges
      that are blocked due to incident values lying on different sides of the gap $I$ in the beginning
      will stay blocked for ever, making consensus impossible. 
      
      For $\theta>h$, however, the behavior is pretty much as in the first case. Nevertheless, when it comes to
      show that there will be arbitrarily flat points with positive probability, one has to go about somewhat 
      differently due to the fact that for sufficiently small $\epsilon,\ 
      \Prob\left(\eta_0\in[\tfrac12-\epsilon,\tfrac12+\epsilon]\right)=0$, which implies that no site can be
      $\epsilon$-flat in the initial configuration by the very definition of flatness (taking $n=0$ in
      (\ref{rflat}) and $m=n=0$ in (\ref{tflat}) respectively).
      
      Let the gap interval be denoted by $I=(\alpha,\alpha+h)$ and fix $\delta>0$.
      Choose two rational numbers in 
      $[0,\tfrac12)\cap[\alpha-\delta,\alpha]$ and $(\tfrac12,1]\cap[\alpha+h,\alpha+h+\delta]$ respectively,
      say $p$ and $q$, and define $I_1:=[p,\alpha]$ and $I_2:=[\alpha+h,q]$.
      Since $I$ is maximal, one can choose these rationals in such a way that
      $$\Prob(\eta_0\in I_1)>0 \text{ as well as } \Prob(\eta_0\in I_2)>0.$$
      \vspace{-0.8cm}
    \begin{figure}[H]
     \hspace*{6.6cm}
     \unitlength=0.80mm
     \begin{picture}(70.00,20.00)
          \put(-2.00,0.00){\vector(1,0){68.00}}
          \put(0.00,-1.50){\line(0,1){3.00}}
          \put(30.00,-1.50){\line(0,1){3.00}}
          \put(60.00,-1.50){\line(0,1){3.00}}
          \put(-0.80,-6.00){$\scriptstyle 0$}
          \put(28.70,-7.00){$\scriptscriptstyle\tfrac12$}
          \put(59.20,-6.00){$\scriptstyle 1$}
          \put(42.00,-1.50){\line(0,1){3.00}}
          \put(10.00,-1.50){\line(0,1){3.00}}
          \put(30.00,9.00){\vector(1,0){11.50}}
          \put(30.00,9.00){\vector(-1,0){19.50}}
          {\color[rgb]{.3,.3,1} 
          \put(7.00,4.00){$I_1$}
          \put(42.30,4.00){$I_2$}}
          \put(8.70,-6.00){$\scriptstyle \alpha$}
          \put(38.10,-6.00){$\scriptstyle \alpha+h$}
          {\color[rgb]{.6,.6,1} 
          \linethickness{1.5pt}
          \put(7.50,0.00){\line(1,0){2.50}}
          \put(42.00,0.00){\line(1,0){4.00}}}
          \put(25.00,11.00){$I$}
          {\color[rgb]{0,0,1} 
          \linethickness{1pt}
          \put(7.50,-1.50){\line(0,1){3.00}}
          \put(46.00,-1.50){\line(0,1){3.00}}
          \put(5.50,-4.00){$\scriptstyle p$}
          \put(46.00,-4.00){$\scriptstyle q$}}
     \end{picture}
   \end{figure}\vspace{-2.22cm}
      
      \par
      \begingroup
      \rightskip17.7em
      Clearly, there exist natural numbers $m,n$ s.t.\ $\tfrac{m}{m+n}\,p+\tfrac{n}{m+n}\,q=\frac12$.
      As numbers from $I_1$ and $I_2$ differ not more than $\delta$ from $p$ and $q$ respectively, the average
      of $m$ numbers from $I_1$ and $n$ numbers from $I_2$ surely lies within $[\tfrac12-\delta,\tfrac12+\delta]$.
      \par\endgroup
      Thus, we get that for any fixed $k\in\N=\{1,2,\dots\}$:
      \begin{equation}\label{innerpart}
      \Prob\left(\frac{1}{k(m+n)}\sum_{v=0}^{k(m+n)-1}\eta_0(v)\in\left[\tfrac12-\delta,\tfrac12+\delta\right]\right)>0.
      \end{equation}
      
      Now let us consider some fixed time point $t>0$ and the corresponding configuration $\{\eta_t(v)\}_{v\in\Z}$.
      There is a.s.\ an infinite increasing sequence of not necessarily consecutive edges
      $(\langle v_k,v_k+1\rangle)_{k\in\N}$ to the right of site $0$, on which no Poisson event has occurred up to
      time $t$.
      
      Clearly, their positions are random,
      so let $l_k:=v_{k+1}-v_k,\text{ for } k\in\N,$ denote the random lengths of the intervals in between and
      $l_0:=v_1-v_0+1$ the one of the interval including $0$, where $\langle v_0-1,v_0\rangle$ is the first 
      edge to the left of the origin without Poisson event. Since the involved Poisson processes are independent,
      it is easy to verify that the $l_k,\ k\in\N_0=\{0,1,2,\dots\}$, are i.i.d., having a geometric distribution
      on $\N$ with parameter $\text{e}^{-t}$.
      
      For $\delta>0$, let $A_\delta$ be the event that $l_0$ is finite and only finitely many of the events
      $\{l_k\geq k\delta\},\ k\in\N,$ occur. Then their independence and the Borel-Cantelli-Lemma tell us 
      that $A_\delta$ has probability $1$. On $A_\delta$ however the following holds a.s.\ true:\vspace{-0.5cm}
      
      \begin{align*}
       \limsup_{v\to\infty}\frac{1}{v+1}\sum_{u=0}^{v}\eta_t(u)&=\limsup_{v\to\infty}\frac{1}{v+1}\sum_{u=v_0}^{v}\eta_t(u)\\
       &\leq\limsup_{v\to\infty}\frac{1}{v+1}\sum_{u=v_0}^{v}\eta_0(u)+\delta\\
       &=\lim_{v\to\infty}\frac{1}{v+1}\sum_{u=0}^{v}\eta_0(u)+\delta=\frac12+\delta.
      \end{align*}
 
      The inequality follows from the fact that the Deffuant model is mass-preserving in the sense that
      $\eta_t(u)+\eta_t(v)=\eta_{t-}(u)+\eta_{t-}(v)$ in (\ref{dynamics}), hence for all $k\in\N$:
      $\sum_{u=v_0}^{v_k}\eta_0(u)=\sum_{u=v_0}^{v_k}\eta_t(u)$. For the average at time $t$ running from 
      $v_0$ to some $v\in\{v_k+1,\dots, v_{k+1}\}$ to differ by more than $\delta$ from the one at time 0,
      the interval has to be of length more than $k\delta$, since $v_k\geq k$ and $\eta_t(u)\in[0,1]$ for all $t,u$.
      This, however, will happen only finitely many times.
      Since $\delta$ was arbitrary and mimicking the same argument for the limes inferior, we have established
      that
      \begin{equation}\label{outerpart}
       \lim_{v\to\infty}\frac{1}{v+1}\sum_{u=0}^{v}\eta_t(u)=\frac12 \text{ almost surely.}
      \end{equation}
      Now fix $\epsilon>0$ such that $h+\tfrac{\epsilon}{3}<\theta$, choose $\delta=\tfrac{\epsilon}{6}$ in 
      (\ref{innerpart}) as well as the rationals $p,q$ and integers $m,n$ accordingly. Due to (\ref{outerpart})
      there exists some integer number $k$ s.t.\ the event
      $$A:=\left\{\frac{1}{v+1}\sum_{u=0}^{v}\eta_t(u)\in\left[\tfrac12-\tfrac{\epsilon}{3},
      \tfrac12+\tfrac{\epsilon}{3}\right]\text{ for all }v\geq N\right\}$$
      has probability greater than $1-\text{e}^{-2t}$, where $N:=k(m+n)-1$. 
      Let $B$ in turn be the event that there was no Poisson event on $\langle -1,0\rangle$ and 
      $\langle N,N+1\rangle$ up to time $t$, hence $\Prob(B)=\text{e}^{-2t}$. Finally, let $C$ be the event
      that the initial values $\eta_0(0),\dots,\eta_0(N)$ were all in $[p,q],\ km$ of them below $\tfrac12$,
      $kn$ above $\tfrac12$, and the Poisson firings on the edges $\langle 0,1\rangle,\dots,\langle N-1,N\rangle$
      up to time $t$ are sufficiently numerous such that, given $B$,
      $\eta_t(u)\in[\tfrac12-\tfrac{\epsilon}{3},\tfrac12+\tfrac{\epsilon}{3}]$
      for all $u\in\{0,\dots,N\}$. Note that $q-p\leq h+2\delta<\theta$, hence every such Poisson event will lead
      to an update, and that the independence of the initial configuration and the Poisson processes together
      with the considerations leading to (\ref{innerpart}) imply that $C$ has positive probability. Furthermore,
      $C$ is independent of $B$ and $A\cap B$ cannot have probability $0$, since
      $$\Prob(A\cap B)=\Prob(A)+\Prob(B)-\Prob(A\cup B)>(1-\text{e}^{-2t})+\text{e}^{-2t}-\Prob(A\cup B)\geq0.$$
      This gives that the conditional probabilities $\Prob(A|B)$ and $\Prob(C|B)$ are both strictly greater than $0$.
      
      Given $B$, we can apply the coupling trick, commonly known as
      {\itshape local modification}, precisely as in the proof of La.\ 4.2 in \cite{ShareDrink} to find
      that $\Prob(A\cap B\cap C)>0$. A one-line calculation shows that $A\cap B\cap C$ implies the $\epsilon$-flatness
      to the right of site $0$ in the configuration at time $t$.
      
      Since the distribution of $\{\eta_t(u)\}_{u\in\Z}$ is still translation and left-right reflection
      invariant, every site $v\in\Z$ is $\epsilon$-flat to the right (or left) at time $t$ with positive
      probability on the one hand, and on the other this allows us to follow the argument in (i) settling the
      subcritical case and forcing $\theta_\text{c}\geq\max\{\tfrac12,h\}$.
      
      A short moment's thought verifies that $\epsilon$-flatness to the right of site $v$ and $\epsilon$-flatness
      to the left of site $v-1$ simultaneously imply two-sided $\epsilon$-flatness of both, $v$ and $v-1$. Let
      $A^r_v, B^r_v,C^r_v$ be the sets appearing above, corresponding to site $v$ and ``right'', and
      $A^l_{v-1}, B^l_{v-1},C^l_{v-1}$ the ones corresponding to $v-1$ and ``left''. The involved independences lead to
      \begin{align*}
       \Prob(A^r_v\cap B^r_v\cap C^r_v\cap A^l_{v-1}\cap B^l_{v-1}\cap C^l_{v-1})\\
       &\hspace{-5.5cm}=\Prob(A^r_v\cap C^r_v\cap A^l_{v-1}\cap C^l_{v-1}|B^r_v\cap B^l_{v-1})\cdot
         \Prob(B^r_v\cap B^l_{v-1})\\
       &\hspace{-5.5cm}=\Prob(A^r_v\cap C^r_v|B^r_v\cap B^l_{v-1})\cdot
         \Prob(A^l_{v-1}\cap C^l_{v-1}|B^r_v\cap B^l_{v-1})\cdot\Prob(B^r_v\cap B^l_{v-1})\\
       &\hspace{-5.5cm}=\Prob(A^r_v\cap C^r_v|B^r_v)\cdot\Prob(A^l_{v-1}\cap C^l_{v-1}|B^l_{v-1})\cdot
        \Prob(B^r_v\cap B^l_{v-1})>0,
      \end{align*} 
      since $\Prob(B^r_v\cap B^l_{v-1})=\text{e}^{-3t}>0$. Hence two-sided $\epsilon$-flatness at time $t$
      has positive probability as well. Following the argument corresponding to the supercritical case in (i),
      using the preserved translation invariance of the distribution of $\{\eta_t(u)\}_{u\in\Z}$ once more,
      we find that there will be consensus in the long run, if only $\theta>\max\{\tfrac12,h\}$. Putting both
      arguments together, this proves the claim $\theta_\text{c}=\max\{\tfrac12,h\}$.
      
     \begin{enumerate}
         \item[(iii)]
         Finally, suppose $[a,b]$ is the smallest closed interval containing the support of the initial opinions'
         distribution and the latter features a gap of width $h\in[0,b-a]$ around the expected value $\E\eta_0\in[a,b]$.
         Then we claim that the critical value becomes $\theta_\text{\upshape c}=\max\{\E\eta_0-a,b-\E\eta_0,h\}$
         and the limit in the case of strong consensus is $\E\eta_0$.
     \end{enumerate} 
      Clearly, the dynamics of the Deffuant model are not effected by translations ($x\mapsto x+c$ for
      some constant $c\in\R$) of the initial distribution. A scaling ($x\mapsto\tfrac{x}{c},\ c\in\R_{>0}$) has the
      only effect that the value for the parameter $\theta$ has to be rescaled too, in order to get identical
      dynamics.
      
      Let $c:=\max\{\E\eta_0-a,b-\E\eta_0\}$ and consider the linear transformation
      $$x\mapsto\tfrac{x-\E\eta_0}{2\,c}+\tfrac12.$$
      The transformed initial distribution satisfies the assumptions in step (ii) and leaves a gap of width
      $\tfrac{h}{2\,c}$ around the mean $\tfrac12$. Therefore, the considerations in (ii) allow us to conclude 
      $$\theta_\text{c}=2\,c\cdot\max\{\tfrac12,\tfrac{h}{2\,c}\}=\max\{c,h\}=\max\{\E\eta_0-a,b-\E\eta_0,h\}.$$
         
      Note that the limit of an individual opinion in the supercritical case is the retransformed equivalent
      of $\frac12$, i.e.\ $2\,c\cdot\big(\tfrac12+(\tfrac{\E\eta_0}{2\,c}-\tfrac12)\big)=\E\eta_0$. 
     
 \item To prove the statement on unbounded initial distributions we have to treat two cases, namely the one
       where $\E|\eta_0|<\infty$ and the other where exactly one of both $\E\eta_0^+,\E\eta_0^-$ is infinite.
     \begin{enumerate}
      \item[(i)] 
      In case of an unbounded initial distribution with existing first moment and expectation $\E\eta_0<\infty$,
      the SLLN reads (for arbitrarily chosen $v\in\Z$):
      \begin{equation*}
       \Prob\left(\lim_{n\to\infty}\frac{1}{n+1}\sum_{u=v}^{v+n}\eta_0(u)=\E\eta_0\right)=1.
      \end{equation*}
      Consequently, there exists some number $r>0$ s.t.
      \begin{equation*}
       \Prob\left(\frac{1}{n+1}\sum_{u=v}^{v+n}\eta_0(u)\in[\E\eta_0-r,\E\eta_0+r]\text{ for all }n\in\N_0\right)>0.
      \end{equation*}
      Slightly abusing the definition (the expectation $\tfrac12$ in (\ref{rflat}) would have to be replaced
      by $\E\eta_0$), one could say that with positive probability site $v$ is $r$-flat to the right.
      
      Let the confidence bound $\theta$ take on some value in $(0,\infty)$. Strictly along the lines of
      Prop.\ 5.1 in \cite{ShareDrink}, it follows that if $v-1$ and $v+1$ are $r$-flat to the left and
      right respectively and simultaneously $\eta_0(v)\notin[\E\eta_0-r-\theta,\E\eta_0+r+\theta]$ -- an event with
      positive probability -- the values at $v-1$ and $v+1$ will throughout all of time stay within the interval
      $[\E\eta_0-r, \E\eta_0+r]$ leaving the edges $\langle v-1,v\rangle$ and $\langle v,v+1\rangle$ blocked. Since this
      happens at every site $v$ with positive probability, ergodic theory tells us that it will almost surely
      occur at infinitely many sites.
     \end{enumerate}
     \begin{enumerate}
      \item[(ii)]
      Now suppose that the expectation of $\eta_0$ exists only in the weak sense, i.e.\ 
      $\E\eta_0\in\{-\infty,+\infty\}$. Once more, symmetry allows us to focus on the case
      $\E\eta_0^+=\infty,\ \E\eta_0^-<\infty$. In this case the SLLN reads
      \begin{equation}\label{SLLN}
       \Prob\left(\lim_{n\to\infty}\frac{1}{n}\sum_{u=v+1}^{v+n}\eta_0(u)=\infty\right)=1.
      \end{equation}
      We can assume $\Prob(\eta_0<0)>0$, otherwise a translation (irrelevant for the dynamics) as in the last
      step of (a) will reduce the problem to this setting. Some one-sided version of the idea of proof using flatness
      can then be employed.
      
      Let the confidence bound $\theta\in(0,\infty)$ be arbitrary but fixed. By (\ref{SLLN}), for sufficiently 
      large $N\in\N$ the following event has non-zero probability:
      \begin{equation*}
       A_N:=\left\{\frac{1}{n}\sum_{u=v+1}^{v+n}\eta_0(u)>\theta\text{ for all }n\geq N\right\}.
      \end{equation*}
      Local modification is again the key step to advance. Let $\xi:=\mathcal{L}(\eta_0)$ denote the 
      distribution of $\eta_0$ and $\xi|_{(\theta, \infty)}$ its distribution conditioned on the
      event $\{\eta_0>\theta\}$. Clearly, $\xi$ is stochastically dominated by $\xi|_{(\theta, \infty)}$,
      i.e.\ $\xi\preceq\xi|_{(\theta, \infty)}$, implying 
      $$\mathcal{L}\big((\eta_0(u))_{u\geq v+1}\big)=\bigotimes_{u\geq v+1}\xi\preceq 
      \left(\bigotimes_{u=v+1}^{v+N}\xi|_{(\theta, \infty)}\right)\otimes\left(\bigotimes_{u>v+N}\xi\right).$$
      
      Let $B$ be the event $\{\eta_0(v+1)>\theta,\dots,\eta_0(v+N)>\theta\}$, which has non-zero probability, and
      $$A_1:=\left\{\frac{1}{n}\sum_{u=v+1}^{v+n}\eta_0(u)>\theta\text{ for all }n\in\N\right\}.$$
      The stochastic domination from above yields:
      \begin{align*}
       \Prob(A_1)&\geq\Prob(A_1\cap B)=\Prob(A_N\cap B)=\Prob(A_N|B)\cdot\Prob(B)\\
                 &\geq\Prob(A_N)\cdot\Prob(B)>0.
      \end{align*}
      The very same ideas as in the proof of Prop.\ 5.1 in \cite{ShareDrink} show that if $A_1$ occurs and the edge
      $\langle v,v+1\rangle$ doesn't allow for an update, irrespectively of the dynamics on $\{u\in\Z, u\geq v+1\}$,
      we have that $\eta_t(v+1)>\theta$ is preserved for all times $t>0$. By symmetry the same holds for site $v-1$
      and the half-line to the left, i.e.\ $\{u\in\Z, u\leq v-1\}$. Independence of the initial opinions
      therefore guarantees that with positive probability, the initial configuration can be such that $\eta_0(v)<0$ 
      and the values at sites $v-1$ and $v+1$ are doomed to stay above $\theta$, blocking the edges adjacent
      to $v$ once and for all.
      Ergodicity makes sure that with probability $1$ infinitely many sites will get stuck this way.
      \end{enumerate}
\end{enumerate}\vspace*{-0.17cm}
\end{nproof}\vspace*{-1.5em}

\begin{examples}
\begin{enumerate}[(a)]
 \item As a first toy application of the above result, let us consider the Deffuant model on $\Z$ in which the 
       initial values are independently distributed according to a beta distribution Beta$(\alpha, \beta)$, where
       the two real numbers $\alpha,\beta> 0$ represent the parameters of this family of distributions.
       That means $\eta_0$ has support $[0,1]$ and its distribution the density function
       $$f_{\alpha,\beta}(x)=\frac{1}{\text{B}(\alpha, \beta)}\;x^{\alpha-1}\,(1-x)^{\beta-1},\quad\text{for }x\in[0,1],$$
       where the normalizing factor is given by the beta function
       $$\text{B}(\alpha, \beta)=\int_0^1 t^{\alpha-1}\,(1-t)^{\beta-1}\,dt.$$
       Since $f_{\alpha,\beta}>0$ on the open interval $(0,1)$, there are no gaps in the support and a simple calculation
       shows $\E\eta_0=\tfrac{\alpha}{\alpha+\beta}$. Consequently, part (a) of Theorem \ref{gen} shows that the critical
       value for the confidence bound separating the regimes of consensus and fragmentation is
       $$\theta_\text{c}=\begin{cases}\tfrac{\alpha}{\alpha+\beta},&\text{if }\alpha\geq\beta\\
                                      \tfrac{\beta}{\alpha+\beta},&\text{otherwise} \end{cases}
                                      =\frac{\max\{\alpha,\beta\}}{\alpha+\beta}.$$
       This example appears in \cite{Shang} as well.
 \item Letting the initial values be independently drawn from a uniform distribution on the discrete set
       $\{-0.8,-0.3,0.7,0.8\}$, $[-0.8,0.8]$ is the minimal closed interval containing the support of
       $\mathcal{L}(\eta_0)$. 
       Obviously, there is a gap of width $h=1$ around the mean $\E\eta_0=0.1$. Part (a) of Theorem \ref{gen}
       tells us that $\theta_\text{c}=\max\{\E\eta_0-(-0.8),0.8-\E\eta_0,h\}=\max\{0.9,0.7,1\}=1$.
 \item If we take the initial opinions to be i.i.d.\ and uniform on the set $[0,\tfrac18]\cup[\tfrac78,1]$ instead, its
       expectation is $\E\eta_0=\tfrac12$. But even though $\Prob(|\eta_0-\E\eta_0|>\tfrac12)=0$, a choice of 
       $\theta\in(\tfrac12,\tfrac34)$ will a.s.\ lead to no consensus, as $\theta_\text{c}=\tfrac34$, again by part
       (a) of the above theorem. The next proposition actually shows that even for $\theta=\theta_\text{c}$ the
       limiting scenario will a.s.\ be no consensus.
\end{enumerate}
\end{examples}

\noindent
For a bounded initial distribution whose support has a large gap around its mean, we can
deal with the behavior at criticality:

\begin{proposition}\label{crit}
 Let the initial opinions be again i.i.d.\ with $[a,b]$ being the smallest closed interval containing
 the support of the marginal distribution,
 and the latter feature a gap $(\alpha,\beta)$ of width $\beta-\alpha>\max\{\E\eta_0-a,b-\E\eta_0\}$ around its
 expected value $\E\eta_0\in[a,b]$.\vspace*{0.5em}

\noindent At criticality, that is for $\theta=\theta_\text{\upshape c}
 =\max\{\E\eta_0-a,b-\E\eta_0,\beta-\alpha\}=\beta-\alpha$, we get
 the following: If both $\alpha$ and $\beta$ are atoms of the distribution $\mathcal{L}(\eta_0)$, i.e.\ 
 $\Prob(\eta_0=\alpha)>0$ and $\Prob(\eta_0=\beta)>0$, the system approaches a.s.\ strong consensus. However, it
 will a.s.\ lead to no consensus if either $\Prob(\eta_0=\alpha)=0$ or $\Prob(\eta_0=\beta)=0$.
\end{proposition}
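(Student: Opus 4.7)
I would split the proposition into its two directions and handle them by different methods; the no-consensus direction is short, while the consensus direction requires adapting the proof of Theorem \ref{gen}(a)(ii) to the critical setting. \textbf{No-consensus direction.} Suppose without loss of generality that $\Prob(\eta_0=\alpha)=0$. Then a.s.\ $\eta_0(v)\in[a,\alpha)\cup[\beta,b]$ for every $v\in\Z$, and this decomposition is preserved by the dynamics: compromises within $[a,\alpha)$ stay in $[a,\alpha)$ as convex combinations, likewise for $[\beta,b]$, while for $x\in[a,\alpha)$ and $y\in[\beta,b]$ we have $y-x>\beta-\alpha=\theta$, so no cross-gap update is ever permitted. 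Hence every edge whose endpoints initially straddle the gap remains blocked forever with discrepancy strictly greater than $\theta$. Since $\Prob(\eta_0(v)<\alpha,\ \eta_0(v+1)\geq\beta)>0$ and the initial configuration is i.i.d., Borel--Cantelli produces infinitely many such edges a.s., ruling out consensus.

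\textbf{Consensus direction.} Assume now $p:=\Prob(\eta_0=\alpha)>0$ and $q:=\Prob(\eta_0=\beta)>0$. My plan is to follow the two-level scheme of Theorem \ref{gen}(a)(ii): show that a fixed site is two-sidedly $\epsilon$-flat at some positive time $t$ with positive probability, and then use translation invariance of $\{\eta_t(u)\}_{u\in\Z}$ together with the $7\epsilon$-pinning lemma (La.\ 6.3 in \cite{ShareDrink}) to conclude strong consensus with limit $\E\eta_0$. The outer-part estimate $\lim_{v\to\infty}\frac{1}{v+1}\sum_{u=0}^v\eta_t(u)=\E\eta_0$ a.s.\ is inherited verbatim, since its proof only uses mass preservation and the growing spacing of Poisson-event-free edges. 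The inner-part modification from (a)(ii), however, invoked $h+2\delta<\theta$ and has to be rebuilt. Given $\delta>0$, I would pick integers $m,n$ with $\left|\tfrac{m\alpha+n\beta}{m+n}-\E\eta_0\right|<\delta$, set $N=k(m+n)$ for a large $k$, and replace the former event $C$ by the event that $\eta_0(0),\ldots,\eta_0(N-1)$ consist of exactly $km$ copies of $\alpha$ and $kn$ copies of $\beta$; this has probability $\binom{N}{km}\,p^{km}q^{kn}>0$. On this event all internal pairwise differences lie in $\{0,\theta\}$, so every Poisson firing on an internal edge produces an actual update, and the dynamics on $[0,N-1]$ reduce to the unconstrained averaging model on a finite path. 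Combined with the events that no Poisson event fires on $\langle-1,0\rangle$ or $\langle N-1,N\rangle$ before time $t$ (so interval mass is preserved) and that enough internal firings occur to drive the averaging model within $\epsilon/3$ of its empirical mean, one obtains $\eta_t(u)\in[\E\eta_0-\epsilon/3-\delta,\E\eta_0+\epsilon/3+\delta]$ for every $u\in[0,N-1]$. Choosing $\delta$ small and combining with the outer-part estimate then yields $\epsilon$-flatness to the right of site $0$ at time $t$; the symmetric construction plus the independence trick at the end of (a)(ii) upgrade this to two-sided $\epsilon$-flatness, after which the supercritical conclusion proceeds as in Section~6 of \cite{ShareDrink}.

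\textbf{Main obstacle.} The genuinely new input is that the internal averaging on $[0,N-1]$ actually succeeds even though we sit at criticality. The saving observation is that once the values on $[0,N-1]$ lie in $\{\alpha,\beta\}$ they remain throughout in the convex hull $[\alpha,\beta]$, so every internal pairwise difference stays $\leq\beta-\alpha=\theta$ and no internal update is ever forbidden; standard averaging-model convergence then does the rest. Beyond this, the argument is a repetition of the supercritical analysis, exploiting that the $7\epsilon$-pinning lemma of \cite{ShareDrink} is a combinatorial statement about SAD-profiles insensitive to whether $\theta>\theta_\text{c}$ or $\theta=\theta_\text{c}$.
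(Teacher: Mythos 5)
Your proposal is correct and follows essentially the same route as the paper: the negative direction via the unbridgeable gap of width exactly $\theta$ when one endpoint is not an atom, and the positive direction by planting exact copies of $\alpha$ and $\beta$ (possible since both are atoms, with all pairwise differences in $\{0,\theta\}$ and hence within the confidence bound) to manufacture $\epsilon$-flat points at a positive time, after which $\theta=\beta-\alpha>\tfrac12$ lets the supercritical machinery of Theorem \ref{gen}(a)(ii) and Section 6 of \cite{ShareDrink} take over. Your explicit observation that the planted values remain in the convex hull $[\alpha,\beta]$, so that no internal update is ever forbidden, is a useful spelling-out of a step the paper leaves implicit.
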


\begin{proof}
In order to prove this statement, we can follow the arguments in the proof of part (a) of Theorem \ref{gen}.
By the translation and scaling invariance of the dynamics as described in step (iii) of the cited proof, we can
restrict ourselves to the case in step (ii) and assume that the support of $\mathcal{L}(\eta_0)$ is a subset of
$[0,1]$, $\E\eta_0=\tfrac12$ and $\Prob\left(\eta_0<\epsilon\ \text{ or }\eta_0>1-\epsilon\right)>0$ for all
$\epsilon>0$. Note that under these further assumptions, we have $\theta=\theta_\text{\upshape c}=\beta-\alpha>\tfrac12$.

If both ends of the gap are atoms, we can follow the reasoning of the supercritical case in (ii)
and for every $\delta>0$ choose natural numbers $m,n$ such that
$\tfrac{m}{m+n}\,\alpha+\tfrac{n}{m+n}\,\beta\in[\tfrac12-\delta,\tfrac12+\delta]$, to get (\ref{innerpart}).
Using such a collection of initial opinions, i.e.\ $m$ times the value $\alpha$ and $n$ times $\beta$, all of
them will be precisely within the confidence bound, hence allow for the manipulation described above as local
modification. Having arbitrarily flat points with positive probability at time $t>0$, $\theta>\tfrac12$
guarantees a.s.\ strong consensus.

The negative statement is easy to handle. If, without loss of generality, $\Prob(\eta_0=\alpha)=0$, with
probability 1 there will
be no initial value lying in the interval $[\alpha,\beta)$. Since $\theta=\beta-\alpha$, this gap cannot be
bridged. We refer once more to step (ii) in the proof of part (a) of Theorem \ref{gen} for a more detailed reasoning.
\end{proof}\vspace*{1em}

\noindent Does Proposition \ref{crit} constitute progress in the attempt to solve the critical case in the
setting of uniformly distributed initial opinions (the open problem mentioned right after Theorem \ref{on Z})?
Probably not, since here, due to the large width of the gap $\beta-\alpha>\max\{\E\eta_0-a,b-\E\eta_0\}$, the
criticality comes only from the gap in the distribution, not the distance between the mean and the extreme
ends of the initial distribution.

As already mentioned in the introductory section, a next step of generalization in terms of the initial
opinions would be vector-valued distributions. Despite the fact that this seems to be a minor modification
it invokes major changes and would thus excessively expand this section, which is why it is omitted here and
treated as a separate topic in \cite{Deffuant2}.

\subsection{Dependent initial opinion values}\label{dep}

The definition of the Deffuant model generalizes straightforwardly to dependent initial configurations.
Considering that -- in our treatment of the model on $\Z$ in the foregoing subsection -- the independence of initial
opinions was merely used to deduce translation invariance and ergodicity with respect to shifts as well as for
the local modification, it is a valid question in how far the results of Theorem \ref{gen} can be generalized
to initial configurations $\{\eta_0(v)\}_{v\in\Z}$ that do not form an i.i.d.\ sequence.
The example below shows that stationarity and ergodicity of the
sequence of initial opinions is not enough to retain the results from Subsection \ref{1d}. In order to be
able to locally modify the configuration as done in the proof of Theorem \ref{gen}, we have to add an extra
condition, which is a natural extension to continuous state spaces of the well-known finite energy condition
of percolation theory (see for instance Def.\ 2 in \cite{BK}).

\begin{definition}\label{finen}
Let $\{\xi_v\}_{v\in\Z}$ be a stationary sequence of random variables. It is said to satisfy the
{\em finite energy condition} if it allows conditional probabilities such that the conditional distribution
of $\xi_0$ given $\{\xi_v\}_{v\in\Z\setminus\{0\}}$ almost surely has the same support as the marginal distribution
$\mathcal{L}(\xi_0)$.
\end{definition}

\noindent
Carefully checking its proof with this extra condition in hand, we can get the following generalization of
Theorem \ref{gen}:

\begin{theorem}
Consider the Deffuant model on $\Z$ with initial opinions values $\{\eta_0(v)\}_{v\in\Z}$.
If $\{\eta_0(v)\}_{v\in\Z}$ is a stationary sequence of random variables, ergodic with respect to
shifts and satisfying the finite energy condition, the results of Theorem \ref{gen} still hold true.
\end{theorem}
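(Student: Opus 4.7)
The plan is to revisit the proof of Theorem \ref{gen} and locate each point where the i.i.d.\ assumption is actually invoked, then replace each invocation with the corresponding tool available in the stationary ergodic plus finite energy setting. Three such ingredients are used: (i) the Strong Law of Large Numbers for partial averages; (ii) translation and reflection invariance of the initial distribution together with the ``positive probability implies infinitely often a.s.'' upgrade; and (iii) the local modification step that forces a prescribed pattern on finitely many consecutive sites with positive probability.

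For (i), Birkhoff's pointwise ergodic theorem applied to the shift on $\Z$ delivers $\tfrac{1}{n+1}\sum_{u=v}^{v+n}\eta_0(u)\to\E\eta_0$ almost surely, and the same limit for the backward averages $\tfrac{1}{n+1}\sum_{u=v-n}^{v}\eta_0(u)$, so there is no need to invoke reflection symmetry in the arguments that produce flat-to-the-right and flat-to-the-left points. For (ii), stationarity itself provides translation invariance, and shift-ergodicity combined with Birkhoff applied to the indicator of a shifted event turns any positive-probability local event into one that occurs at infinitely many sites almost surely. For (iii), the finite energy condition in Definition \ref{finen} is exactly the substitute for independence: iterating over a finite collection of sites $v_1,\dots,v_k$ one coordinate at a time and conditioning on all other coordinates, the conditional distribution at each $v_i$ retains the full marginal support of $\mathcal{L}(\eta_0)$, so we can place $\eta_0(v_i)$ in any prescribed open subset of $\supp(\mathcal{L}(\eta_0))$ with positive conditional probability; iterating across the finite set keeps the joint probability positive. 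The independence of the driving Poisson processes from the initial configuration is preserved by assumption, so the arguments concerning Poisson events (e.g.\ the lengths $l_k$ of unfired edge-gaps in step (ii) of part (a)) carry over verbatim.

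The most delicate spot will be step (ii) of part (a), where one intersects three events: the tail event $A$ that the running average at time $t$ from site $0$ stays close to $\tfrac12$, the Poisson event $B$ (untouched by the generalization), and the event $C$ requiring very specific initial values on $\{0,1,\dots,N\}$ together with enough Poisson firings. Here one must show that, conditionally on $B$, the events $A$ and $C$ simultaneously have positive probability. The argument proceeds by first using Birkhoff to produce a positive-probability configuration on $\{N+1,N+2,\dots\}$ for which the tail requirement embedded in $A$ is met, then invoking finite energy $N+1$ times to reset the interior coordinates to values required by $C$, noting that altering a bounded number of initial values cannot change any of the limits of Ces\`aro averages that enter $A$. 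The analogous issue arises in part (b)(ii), where the stochastic domination estimate based on the i.i.d.\ product factorization must be replaced by a direct finite-energy modification of $\eta_0(v+1),\dots,\eta_0(v+N)$, combined with Birkhoff's theorem ensuring that, on the modified event, the forward averages continue to exceed $\theta$ for all $n$. Checking that these finite-energy insertions can always be grafted onto a Birkhoff-produced tail without destroying positive probability is the main technical burden.
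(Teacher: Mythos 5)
Your proposal follows essentially the same route as the paper, which proves this theorem only by the remark that independence was used solely for translation invariance, shift-ergodicity and local modification, and that these are supplied respectively by stationarity, ergodicity (via Birkhoff, also for the inverse shift in place of reflection symmetry) and the finite energy condition. You in fact give more detail than the paper does, correctly identifying the two genuinely delicate points (the $A\cap B\cap C$ intersection in step (a)(ii) and the replacement of the stochastic-domination step in (b)(ii) by a finite-energy insertion measurable against a tail event), so the approaches coincide.
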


\noindent
To see that the added assumption that conditioning on the configuration apart from a given
site $v$ will not change the support of the distribution at site $v$ is essential and can not be dropped,
see the following example.

\begin{example}
Let $U$ be a random variable, uniformly distributed on $\{-4,-3,\dots,4\}$. The initial configuration
will now be made up of blocks of length $9$ centered in the sites $\{c_k\}_{k\in\Z}:=\{U+9\,k\}_{k\in\Z}$.
Each block will independently be either of the form $\eta_0(c_k)=\tfrac12$ and $\eta_0(v)=0$ for 
$v\in\{c_k-4,\dots,c_k-1,c_k+1,\dots,c_k+4\}$ or $\eta_0(c_k)=\tfrac12$ and $\eta_0(v)=1$ for 
$v\in\{c_k-4,\dots,c_k-1,c_k+1,\dots,c_k+4\}$, both with probability $\tfrac12$.

The initial configuration $\{\eta_0(v)\}_{v\in\Z}$ defined in that way is translation invariant and
ergodic with respect to shifts, having the marginal distribution $\mathcal{L}(\eta_0)$, where
$\Prob(\eta_0=0)=\Prob(\eta_0=1)=\tfrac{4}{9}$ and $\Prob(\eta_0=\tfrac12)=\tfrac{1}{9}$.

If Theorem \ref{1d} applied, the critical value should be $\theta_\text{\upshape c}=\tfrac12$ but it is
not hard to see that for $\theta<\tfrac45$ compromises are at first confined to happen within intervals consisting
of blocks of the same kind and can thus only lead to values in $[0,\tfrac{1}{10}]\cup[\tfrac{9}{10},1]$ at sites next
to a neighboring block of the other kind, see also Thm.\ 2.3 in \cite{ShareDrink}. This means that the edges connecting
two blocks of different kind will be blocked throughout time forcing a.s.\ no consensus.
\par
\vspace{0.5em}
\noindent
Due to the fixed block size, the sequence $\{\eta_0(v)\}_{v\in\Z}$ as defined above is obviously not mixing.
An easy modification, for instance allowing random block lengths taking values 9 and 11, shows that even an initial
configuration which is given by a stationary mixing sequence of random variables does not, in general, allow for
the results of the i.i.d.\ case to be transferred.
%
\end{example}

\section{Upper bound for the critical range of \texorpdfstring{$\theta$}{theta} on 
\texorpdfstring{$\Z^d$}{{\bf Z\textasciicircum d}}}\label{UpperBound}
\subsection{Application of energy arguments}

Moving on to higher dimensions as far as the underlying lattice is concerned provides the opportunity to go around
blocked edges and there is no handy generalization of the notion of flatness. Among other things, these changes render
most of the arguments used in the $\Z$ case void. Enough can be resurrected, however, to establish a lower bound for
$\theta$ above which consensus is achieved. Throughout Sections 3 and 4 (Theorem \ref{perc2} being an exception) we will
only assume that the configuration of initial opinion values $\{\eta_0(v)\}_{v\in\Z^d}$ is stationary and ergodic with
respect to shifts of the kind $T_i: v\mapsto v+e_i$, where $e_i$ is the $i$th standard basis vector of $\R^d$ for
$i\in\{1,\dots,d\}$.

\begin{theorem}\label{Zd}
\begin{enumerate}[(a)]
\item  If the initial values are distributed uniformly on $[0,1]$ and $\theta>\tfrac34$,
       the configuration will a.s.\ approach weak consensus, i.e.
       $$\Prob\big(\lim_{t\to\infty}|\eta_t(u)-\eta_t(v)|=0\big)=1$$
       for all $u,v\in\Z^d$ s.t.\ $\langle u,v\rangle$ forms an edge.
\item  For general initial distributions on $[0,1]$ the range of $\theta$, where final
       consensus is guaranteed, is non-trivial, i.e.\ including values smaller than $1$,
       unless the initial values are concentrated on $0$ and $1$, taking on both values
       with positive probability.
\end{enumerate}
\end{theorem}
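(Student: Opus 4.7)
The proof is driven by an energy-dissipation argument. Setting $\mathcal{E}_t := \E \eta_t(0)^2$, which by the assumed translation invariance equals $\E \eta_t(v)^2$ for every $v\in\Z^d$, I first observe that each Poisson event on an edge $\langle u,v\rangle$ whose endpoint values lie within $\theta$ of one another deterministically reduces $\eta(u)^2+\eta(v)^2$ by $2\mu(1-\mu)(\eta_{t-}(u)-\eta_{t-}(v))^2$. Hence $\mathcal{E}_t$ is non-increasing in $t$ and converges to some $\mathcal{E}_\infty\in[0,\mathcal{E}_0]$ (with $\mathcal{E}_0=1/3$ in the uniform case). Computing $\tfrac{d}{dt}\mathcal{E}_t$ through the generator and integrating gives
\[
\mathcal{E}_0-\mathcal{E}_\infty \;=\; 2d\,\mu(1-\mu)\int_0^\infty \E\bigl[(\eta_t(0)-\eta_t(e_1))^2\,\mathbbm{1}_{\{|\eta_t(0)-\eta_t(e_1)|\le\theta\}}\bigr]\,dt,
\]
so the integrand is in $L^1(dt)$ and, via Fubini, for every edge $e=\langle u,v\rangle$ we have a.s.\ $\int_0^\infty \Delta_e(t)^2\,\mathbbm{1}_{\{|\Delta_e|\le\theta\}}\,dt<\infty$, where $\Delta_e(t):=\eta_t(u)-\eta_t(v)$.

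Next I would observe that jumps of $|\Delta_e(t)|$ are uniformly bounded by $\mu\theta\le\theta/2$: a Poisson firing on an edge incident to, but distinct from, $e$ changes one endpoint of $e$ by $\mu$ times a quantity of absolute value $\le\theta$, while firings on $e$ itself only occur when $|\Delta_e|\le\theta$ and strictly contract it. Consequently, for any $\varepsilon<\theta/2$ the process $t\mapsto|\Delta_e(t)|$ cannot traverse the interval $(\varepsilon,\theta)$ in a single jump, so combining this with the finite-integral bound yields the dichotomy: a.s., for every edge $e$, either $\lim_{t\to\infty}|\Delta_e(t)|=0$ or eventually $|\Delta_e(t)|>\theta$. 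Part (a) then reduces to excluding the second alternative when $\theta>3/4$.

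For this exclusion step I would combine the mean preservation $\E\eta_t(v)=1/2$ with the non-decrease of $\E\phi(\eta_t(v))$ for any concave $\phi\colon[0,1]\to\R$ (a dual energy following from concavity applied pointwise to each permitted update). On the ``eventually blocked'' event both endpoints of the offending edge must lie in the outside set $[0,1-\theta)\cup(\theta,1]$, where $\phi$ is strictly below its maximum, and reverse Fatou together with translation invariance bounds the probability of this event. I expect this to be the main obstacle: naive single-vertex choices such as $\phi(x)=x(1-x)$ only deliver the weaker threshold $\theta>\tfrac12+\tfrac1{2\sqrt3}\approx 0.789$, so reaching the cleaner bound $3/4$ will likely require exploiting the fact that $d\ge 2$: positive densities of ``eventually low'' and ``eventually high'' vertices force, by an isoperimetric argument on $\Z^d$, a positive density of blocked interface edges, each of which contributes an additional dissipation cost incompatible with the finite energy budget.

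For part (b), the plan is a reduction to (a) by affine rescaling. If the smallest closed interval containing $\supp(\mathcal{L}(\eta_0))$ is $[a,b]$ with $b-a<1$, the map $x\mapsto(x-a)/(b-a)$ intertwines the dynamics at confidence bound $\theta$ with a Deffuant model on $\Z^d$ whose marginals live in $[0,1]$ and whose effective confidence bound is $\theta/(b-a)$; the energy machinery of part (a) (which only uses finiteness of $\E\eta_0^2$, not uniformity) then yields weak consensus for $\theta>3(b-a)/4<1$. When the support touches both $0$ and $1$ but $\Prob(\eta_0\in(0,1))>0$, fix $\delta\in(0,\tfrac12)$ with $\Prob(\eta_0\in[\delta,1-\delta])>0$; such bridge vertices have positive density by ergodicity and, once $\theta>1-\delta$, are initially compatible with every neighbour, which should suffice to feed the same dichotomy argument and exclude eventually blocked edges. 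The sole genuinely obstructive case is $\Prob(\eta_0\in\{0,1\})=1$ with both atoms positive, where ergodicity produces a positive density of initially blocked edges with $|\Delta_0|=1$ that no $\theta<1$ can ever unblock, explaining the exception in the statement.
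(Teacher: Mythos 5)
Your skeleton matches the paper's: an energy--dissipation identity (the paper's Lemma 3.2, proved via Zygmund's ergodic theorem on boxes with boundary terms controlled by Borel--Cantelli rather than via the generator, which needs care on an infinite graph), the resulting dichotomy ``finally blocked or finally concurring'' for each edge including the jump-size observation $\mu\theta\le\theta/2$, and a $0$--$1$ law so that one blocked edge with positive probability forces every vertex eventually into $[0,1-\theta)\cup(\theta,1]$. Up to that point your argument is essentially the paper's.

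The genuine gap is exactly the step you flag yourself: how to get from ``all vertices eventually satisfy $|\eta_t(v)-\tfrac12|\ge\theta-\tfrac12$'' to the threshold $\tfrac34$. Your quadratic (equivalently $x(1-x)$) energy only yields $\tfrac12+\tfrac1{2\sqrt3}\approx0.789$, and the isoperimetric/interface mechanism you propose to close the remaining distance is not developed and is not needed. The resolution is that the conservation law $\E[W^{\text{tot}}_t(v)]=\E[W_0(\mathbf 0)]$ holds for \emph{any} convex energy function $\En$, not just $x\mapsto x^2$; choosing the piecewise-linear convex function $\En(x)=|x-\tfrac12|$ and applying Fatou gives
\[
\theta-\tfrac12\;\le\;\E\bigl[\liminf_{t\to\infty}|\eta_t(v)-\tfrac12|\bigr]\;\le\;\liminf_{t\to\infty}\E[W^{\text{tot}}_t(v)]\;=\;\E\bigl[|\eta_0-\tfrac12|\bigr]=\tfrac14,
\]
a contradiction for $\theta>\tfrac34$, valid in every dimension with no geometric input. (The paper's remarks even show this choice of $\En$ is optimal among convex energies for symmetric distributions.) The same inequality disposes of part (b) in one line: blocked edges force $\theta\le\tfrac12+\E|\eta_0-\tfrac12|$, which is strictly less than $1$ unless $\Prob(\eta_0\in\{0,1\})=1$. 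Your rescaling reduction only covers supports of diameter less than $1$, and your ``bridge vertices'' argument for distributions whose support touches both $0$ and $1$ but charges $(0,1)$ is not a proof --- initial compatibility of a positive density of vertices with their neighbours does not by itself exclude finally blocked edges elsewhere.
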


\noindent
     To prove this, we need first to establish some lemmas, the first one involving the idea of {\em energy},
     introduced in Sect.\ 6 of \cite{ShareDrink} (not to be confused with the completely unrelated concept
     of finite energy from Subsection \ref{dep}). 
     
     Assume the initial values $\{\eta_0(v)\}_{v\in\Z^d}$ have a stationary distribution, ergodic with respect to shifts
     and the marginal distribution has bounded support, without loss of generality we can take $[0,b]$
     to be the smallest closed interval containing it. Denote by
     $W_t(v)=\En(\eta_t(v))$ the energy at vertex $v$ at time $t$, where $\En:[0,b]\to\R_{\geq0}$
     is some fixed convex function.
     If a Poisson event occurs at the edge $e=\langle u,v\rangle$ at time $t$, and the values at $u$ and $v$,
     $\eta_{t-}(u)$ and $\eta_{t-}(v)$ respectively, are within $\theta$, energy is transferred and (possibly) lost
     along the edge. The latter to the amount
     \begin{equation}\label{loss}
      w_t(e):=(W_{t-}(u)+W_{t-}(v))-(W_t(u)+W_t(v)).
     \end{equation}
     Since $\eta_t(u)=(1-\mu)\,\eta_{t-}(u)+\mu\,\eta_{t-}(v)$ and $\eta_t(v)=(1-\mu)\,\eta_{t-}(v)+\mu\,\eta_{t-}(u)$,
     the convexity of $\En$ gives:
     \begin{align*}
     W_t(u)+W_t(v) & \leq (1-\mu)\,W_{t-}(u)+\mu\,W_{t-}(v)+(1-\mu)\,W_{t-}(v)+\mu\,W_{t-}(u)\\
                   & = W_{t-}(v)+W_{t-}(u),
     \end{align*}             
     i.e.\ the non-negativity of $w_t(e)$. Let $T$ denote the sequence of arrival times of the Poisson events at $e$
     and define the accumulated energy loss along $e$ as $$W_t^\text{loss}(e):=\sum_{s\in T\cap[0,t]}w_s(e).$$
     Finally, let $E(v)$ denote the set of edges incident to $v$ and define the total energy attributed to vertex $v$ as
     \begin{equation}\label{tot}
     W^\text{tot}_t(v):=W_t(v)+\frac12\,\sum_{e\in E(v)}W_t^\text{loss}(e).
     \end{equation}
     Note that by (\ref{loss}) the sum $W^\text{tot}_t(v)+W^\text{tot}_t(u)$
     is preserved when an update along the edge $\langle u,v\rangle$ takes place.
     Along the lines of La.\ 6.2 in \cite{ShareDrink} we can show the following analog:

\begin{lemma}\label{avpre}
For every $v\in\Z^d$ and $t\geq0$ we have
  \begin{equation}\label{ergodic}
     \E[W^\text{\upshape tot}_t(v)]=\E[W_0(\mathbf{0})].
  \end{equation}
\end{lemma}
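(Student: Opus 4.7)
My plan is to combine spatial stationarity of the process law at each time $t$ with the pairwise conservation identity
\[
W^{\text{tot}}_t(u) + W^{\text{tot}}_t(v) \;=\; W^{\text{tot}}_{t-}(u) + W^{\text{tot}}_{t-}(v),
\]
valid whenever the Poisson clock on $\langle u, v \rangle$ fires, which is immediate from (\ref{loss}) combined with the definition (\ref{tot}).

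The stationarity is essentially for free: the initial configuration is stationary by hypothesis, the edge clocks form an i.i.d.\ family (hence a stationary field under shifts by any $T_i$), and the Deffuant update is shift-equivariant, so the joint law of $\{\eta_t(\cdot)\}$ remains $T_i$-invariant at every time $t$. Hence $f(t) := \E[W^{\text{tot}}_t(v)]$ does not depend on $v$, and at $t = 0$ no losses have yet accumulated, giving $f(0) = \E[W_0(\mathbf{0})]$; the task reduces to proving $f(t) = f(0)$ for all $t > 0$.

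To that end I would pass to a large box $\Lambda_n := \{-n,\ldots,n\}^d$ and study $S_n(t) := \sum_{v \in \Lambda_n} W^{\text{tot}}_t(v)$. Writing
\[
S_n(t) \;=\; \sum_{v \in \Lambda_n} W_t(v) \;+\; \sum_{e \in E_n^{\mathrm{int}}} W^{\text{loss}}_t(e) \;+\; \tfrac12 \sum_{e \in E_n^{\mathrm{bdry}}} W^{\text{loss}}_t(e),
\]
where $E_n^{\mathrm{int}}$ is the set of edges with both endpoints in $\Lambda_n$ and $E_n^{\mathrm{bdry}}$ those with exactly one, the pairwise conservation identity implies that firings on interior edges leave $S_n$ completely untouched. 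Thus $S_n(t) - S_n(0)$ receives contributions only from firings on the $O(n^{d-1})$ boundary edges. Since $\mathcal{L}(\eta_0)$ has support in the compact interval $[0,b]$, the convex $\En$ is bounded there by some constant $E^{\ast}$; hence $W_t(v) \leq E^{\ast}$ always, and a single boundary firing perturbs $S_n$ by at most $2E^{\ast}$ in absolute value. Combined with the rate-$1$ Poisson clocks this gives
\[
\bigl|\E S_n(t) - \E S_n(0)\bigr| \;\leq\; 2E^{\ast}\,t\,|E_n^{\mathrm{bdry}}| \;=\; O(n^{d-1}),
\]
whereas stationarity yields $\E S_n(t) = |\Lambda_n|\,f(t)$ with $|\Lambda_n| = \Theta(n^d)$. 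Dividing by the volume and sending $n \to \infty$ forces $f(t) = f(0) = \E[W_0(\mathbf{0})]$.

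The most delicate step is the boundary accounting: one has to check that the per-firing change attributable to a boundary edge $\langle u, v \rangle$ with $u \in \Lambda_n$, $v \notin \Lambda_n$, namely $W_t(u) - W_{t-}(u) + \tfrac12 w_t(e)$, really admits a deterministic bound in terms of $\En|_{[0,b]}$. This is the one point where the bounded-support assumption on $\mathcal{L}(\eta_0)$ genuinely enters the argument; everything else relies only on the two structural ingredients above.
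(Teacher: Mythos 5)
Your proof is correct, and while it rests on the same core decomposition as the paper's --- sum the total energies over a box $\Lambda_n$, observe that Poisson events on interior edges conserve this sum exactly by the pairwise conservation identity, and note that the boundary contribution is only of order $n^{d-1}$ against a volume of order $n^d$ --- your execution is genuinely more elementary. The paper first invokes Zygmund's multivariate ergodic theorem to identify $\E[W^{\text{tot}}_t(\mathbf{0})]$ with the almost-sure limit of the spatial averages, then passes to a subsequence $(\Lambda_{n_k})$ with summable boundary-to-volume ratio and controls the number of boundary firings over a short time window via Chebyshev and Borel--Cantelli to show the a.s.\ limit is constant in time. You bypass all of that by working at the level of expectations: translation invariance alone gives $\E S_n(t)=|\Lambda_n|f(t)$, and the bound $|\E S_n(t)-\E S_n(0)|\le 2E^{*}\,t\,|E_n^{\mathrm{bdry}}|$ follows from linearity of expectation and the mean of the Poisson counts, after which dividing by $|\Lambda_n|$ finishes the proof. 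This buys you two things: you never need ergodicity (only stationarity) for this lemma, and you avoid the subsequence and almost-sure machinery entirely; your deterministic per-firing bound on $W_t(u)-W_{t-}(u)+\tfrac12 w_t(e)$ is valid (indeed one can sharpen $2E^{*}$ to $E^{*}=M$ using the conservation identity, as the paper does, though the constant is immaterial). The only caveat is that the paper's route also delivers the almost-sure spatial-average statement, which it reuses verbatim in the percolation setting of Lemma 4.1; for the expectation identity actually claimed in Lemma 3.2, your argument is complete.
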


\begin{proof}
     Note first that for fixed time $t$ the process $\{W^\text{tot}_t(v)\}_{v\in\Z^d}$ only depends on the initial
     configuration and the independent Poisson processes attributed to the edges. Its distribution is therefore
     translation invariant and the process ergodic with respect to shifts.
     
     Let $\Lambda_n=[-n,n]^d$ denote the box of sidelength $2n$ centered at the origin $\mathbf{0}$. 
     It contains $|\Lambda_n|=(2n+1)^d$ vertices of the grid $\Z^d$ and there are $2d\,(2n+1)^{d-1}$ edges linking
     vertices inside $\Lambda_n$ to vertices outside of the box. The set of such edges is called {\em edge boundary}
     of $\Lambda_n$ and denoted by $\partial_E\Lambda_n$.
     
     The multivariate version of Birkhoff's Theorem, attributed to Zygmund (see e.g.\ Thm.\ 10.12 in \cite{Foundations}),
     tells us that
     \begin{equation}\label{Birkhoff}
      \lim_{n\to\infty}\frac{1}{|\Lambda_n|}\sum_{v\in\Lambda_n}W^\text{tot}_t(v)=\E[W^\text{tot}_t(\mathbf{0})] 
      \text{  almost surely.}
     \end{equation}
     Note that the statement of (\ref{Birkhoff}) is still true if we pass from the original sequence of sets
     $(\Lambda_n)_{n\in\N}$ to any subsequence.
     
     Translation invariance of the configuration implies $\E[W^\text{tot}_t(v)]=\E[W^\text{tot}_t(\mathbf{0})]$
     for all sites $v$ and by definition $W_0^\text{loss}(e)=0$ for all edges $e$ since at time 0 no Poisson
     event has occurred yet, hence $W^\text{tot}_0(\mathbf{0})=W_0(\mathbf{0})$.
     \par\vspace{0.5em}
     \noindent
     Let us now choose a subsequence $(\Lambda_{n_k})_{k\in\N}$ such that
     \begin{equation}\label{summable}
     \sum_{k=1}^\infty\frac{|\partial_E\Lambda_{n_k}|}{|\Lambda_{n_k}|}<\infty.
     \end{equation}
     As mentioned, (\ref{Birkhoff}) clearly implies
     \begin{equation}\label{subseq}
      \lim_{k\to\infty}\frac{1}{|\Lambda_{n_k}|}\sum_{v\in\Lambda_{n_k}}W^\text{tot}_t(v)=\E[W^\text{tot}_t(\mathbf{0})] 
      \text{  almost surely.}
     \end{equation}
     In order to establish the claim it is therefore left to show that the limit in (\ref{subseq}) is constant over time.
      
     Take $\epsilon>0$ small and fix a time interval $[t,t+\epsilon]$. Note that the energy function $\En$ is
     bounded on $[0,b]$ by $M:=\max\{\En(0),\En(b)\}$, due to its convexity.
     Let $N_{n,\epsilon}$ be the number of Poisson events on edges in $\partial_E\Lambda_{n}$ within the time interval
     $(t,t+\epsilon]$, see Figure \ref{boxes}, and $A_n$ be the event 
     $$A_n:=\left\{N_{n,\epsilon}\geq\tfrac{1}{M}\,\Big(|\partial_E\Lambda_{n}|+\sqrt{|\Lambda_{n}|}\Big)\right\}.$$
     The number on every single edge is a Poisson distributed random variable with parameter $\epsilon$, consequently
     having mean and variance $\epsilon$.
     
     As those random variables are independent, a choice of $\epsilon$ such that $\epsilon\leq\tfrac1M$ yields using
     Chebyshev's inequality:
     $$\Prob(A_n)\leq\Prob\Big(N_{n,\epsilon}-\E N_{n,\epsilon}\geq\tfrac{1}{M}\,\sqrt{|\Lambda_{n}|}\Big)
     \leq M^2\,\frac{\text{var}(N_{n,\epsilon})}{|\Lambda_{n}|}\leq M\,\frac{|\partial_E\Lambda_{n}|}{|\Lambda_{n}|}.$$
     
   \begin{figure}[H]
     \centering
     \includegraphics[scale=0.9]{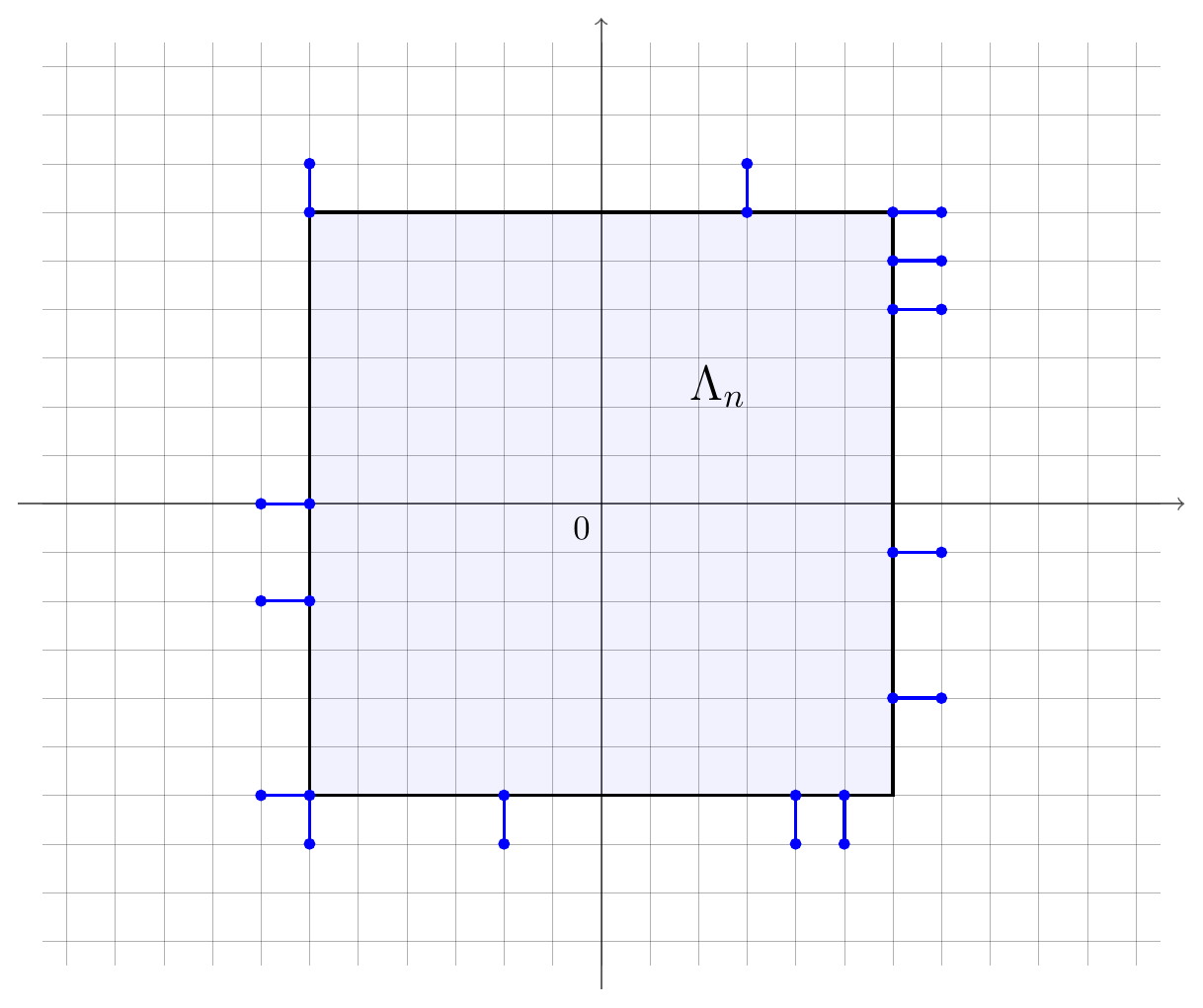}
     \caption{The interactions on the boundary of the box $\Lambda_{n}$ in the time interval\\
              $[t,t+\epsilon]$ are few compared to the size of the box for large $n$.\label{boxes}}
    \end{figure}
    \noindent     
     In view of (\ref{summable}), the Borel-Cantelli-Lemma shows that almost surely only finitely many
     $A_{n_k}$ will occur. In order to conclude, we have to show that this implies
     \begin{equation}\label{lts}
     \lim_{k\to\infty}\frac{1}{|\Lambda_{n_k}|}\sum_{v\in\Lambda_{n_k}}W^\text{tot}_{t+\epsilon}(v)=
     \lim_{k\to\infty}\frac{1}{|\Lambda_{n_k}|}\sum_{v\in\Lambda_{n_k}}W^\text{tot}_t(v),
     \end{equation}
     which in turn guarantees that the limit in (\ref{subseq}) is constant over time.
     
     It is not hard to convince yourself that Poisson events off $\partial_E\Lambda_{n_k}$ will not change
     $\sum_{v\in\Lambda_{n_k}}W^\text{tot}_{t}(v)$ and every single event on $\partial_E\Lambda_{n_k}$
     can change the sum of total energies in $\Lambda_{n_k}$ by at most $M$. Therefore, on the complement of
     $A_{n_k}$, we get that
     $$\frac{1}{|\Lambda_{n_k}|}\;\Bigg|\sum_{v\in\Lambda_{n_k}}W^\text{tot}_{t+\epsilon}(v)-
     \sum_{v\in\Lambda_{n_k}}W^\text{tot}_t(v)\Bigg|\leq \frac{M}{|\Lambda_{n_k}|}\cdot N_{n_k,\epsilon}
     <\frac{|\partial_E\Lambda_{n_k}|}{|\Lambda_{n_k}|}+\frac{1}{\sqrt{|\Lambda_{n_k}|}}.$$
     As this converges to $0$ when $k\to\infty$, we have shown that (\ref{lts}) holds almost surely, which
     concludes the proof.

\end{proof}

\begin{lemma}\label{asbeh}
  For the Deffuant model on the lattice $\Z^d$ as above, with threshold parameter $\theta\in(0,b]$,
  the following holds a.s.\ for every two neighbors $u,v\in\Z^d$:
  \begin{align}
   \begin{split}
    &\text{Either }|\eta_t(u)-\eta_t(v)|>\theta\text{ for all sufficiently large t, i.e. the edge }
    \langle u,v\rangle\\
    &\text{is finally blocked, or}\\
    &\lim_{t\to\infty}|\eta_t(u)-\eta_t(v)|=0,\text{ i.e. the two neighbors will finally concur.}
   \end{split}
  \end{align}
\end{lemma}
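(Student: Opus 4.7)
The plan is to use the energy method developed in the preceding discussion, specialized to the convex energy function $\En(x)=x^2$, which is bounded on the support $[0,b]$ of the initial distribution. By Lemma~\ref{avpre}, $\E[W^{\text{tot}}_t(v)] = \E[\eta_0^2] \leq b^2$ for every $v\in\Z^d$ and $t\geq 0$. Since $W^{\text{tot}}_t(v)$ is a sum of nonnegative terms and each $W^{\text{loss}}_t(e')$ is nondecreasing in $t$, monotone convergence gives $W^{\text{loss}}_\infty(e') < \infty$ almost surely for every edge $e'$.

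A routine calculation shows that an update on $e = \langle u,v\rangle$ at time $T$ dissipates exactly $2\mu(1-\mu)\,D_{T-}^2$ units of energy, where $D_t := |\eta_t(u)-\eta_t(v)|$. Writing $K$ for the (random) set of firing times of $e$ at which $D_{T-}\leq\theta$, summing per-update losses yields
\[
W^{\text{loss}}_\infty(e) \;=\; 2\mu(1-\mu)\sum_{k\in K}D_{T_k-}^2 \;<\;\infty \quad \text{a.s.},
\]
and hence $D_{T_k-}\to 0$ along $k\in K$ almost surely.

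I now split on the cardinality of $K$. If $|K|<\infty$, I claim that $e$ is finally blocked: otherwise $D_t\leq\theta$ holds on arbitrarily large time intervals, and at the right endpoint of each such interval the next Poisson firing among the edges incident to $\{u,v\}$ is, conditionally on the past, an $e$-firing with probability at least $1/(4d-1)$, which would be an update. A conditional Borel--Cantelli argument then forces $|K|=\infty$, contradicting the hypothesis.

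If instead $|K|=\infty$, then together with $D_{T_k-}\to 0$ and the update rule $D_{T_k}=(1-2\mu)D_{T_k-}$, we also get $D_{T_k}\to 0$. Upgrading this to $D_t\to 0$ uniformly is the main obstacle: one must rule out large excursions of $D$ on the intervals $[T_k,T_{k+1})$. The key leverage is that $W^{\text{loss}}_\infty(e')<\infty$ for every edge $e'$ incident to $u$ or $v$ also controls the squared sizes of the jumps $\Delta_j$ imparted to $D$ by firings on adjacent edges, whose total squared sum is therefore almost surely finite, so large jumps are rare. A Cauchy--Schwarz accounting shows that a persistent excursion of $D$ between two consecutive updates of $e$ would dissipate a significant chunk of energy infinitely often, contradicting the finite total loss; combined with the contractive effect of updates on $e$, this gives $\sup_{t\in[T_k,T_{k+1})}D_t\to 0$, completing the proof of the dichotomy.
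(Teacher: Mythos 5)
Your setup is the same as the paper's: take $\En(x)=x^2$, use Lemma \ref{avpre} to get $\E[W_t^{\text{loss}}(e)]\leq 2\E[W_0(\mathbf{0})]$ uniformly in $t$, conclude $W_\infty^{\text{loss}}(e)<\infty$ a.s., and note that each update on $e$ dissipates $2\mu(1-\mu)D_{T-}^2$. Your treatment of the ``finitely many updates'' branch (conditional Borel--Cantelli on the $4d-1$ incident edges) is also the paper's mechanism. The problem is the other branch.

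The gap is in upgrading $D_{T_k}\to 0$ along update times to $\lim_{t\to\infty}D_t=0$. Your ``Cauchy--Schwarz accounting'' does not close: finiteness of $\sum_j\Delta_j^2$ for the jumps imparted by adjacent edges controls individual jump sizes but not cumulative drift. An excursion of $D$ of height $\delta$ built from $N$ jumps on adjacent edges satisfies $\delta\leq\sum_j|\Delta_j|\leq\sqrt{N}\bigl(\sum_j\Delta_j^2\bigr)^{1/2}$, so it need only dissipate energy of order $\delta^2/N$ on those edges; since $N$ is unbounded (the waiting time to the next update of $e$ is unbounded), infinitely many such excursions are compatible with finite total loss, and your contradiction never materializes. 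The correct repair is the paper's argument, which needs no case split on $|K|$ at all: fix $\delta\in(0,\theta/2)$ and observe that whenever $D_s\in(\delta,\theta]$, the next Poisson firing among the $4d-1$ edges incident to $u$ or $v$ lands on $e$ itself with conditional probability $1/(4d-1)$ (and $D$ is unchanged until that firing), in which case at least $2\mu(1-\mu)\delta^2$ is dissipated on $e$. The conditional Borel--Cantelli lemma then forces $W_t^{\text{loss}}(e)\to\infty$ if $D_t\in(\delta,\theta]$ at arbitrarily large times, so a.s.\ $D_t\notin(\delta,\theta]$ eventually; since every single update moves $D$ by at most $\mu\theta\leq\theta/2<\theta-\delta$, $D$ cannot pass between $[0,\delta]$ and $(\theta,b]$ without visiting $(\delta,\theta]$, and letting $\delta\downarrow 0$ along a countable sequence gives the dichotomy. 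You should replace your excursion argument by this one; everything before it can stay.
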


\begin{proof}
  The above lemma corresponds to Prop.\ 6.1 in \cite{ShareDrink} and the original proof
  generalizes to the higher-dimensional setting with only minor changes. 
  
  As the times between Poisson events on a single edge are exponentially distributed, the
  memoryless property ensures that given a finite collection of edges and some fixed time $s$, the
  edge which experiences the next Poisson event is chosen uniformly at random. Let us take
  $\En: x\mapsto x^2$ as energy function and fix $e=\langle u,v\rangle$ as well as
  some $\delta>0$. If there is a Poisson event at $e$ at time $t$ and the opinion values
  of $u$ and $v$ are not more than $\theta$ apart from each other, energy to the amount of
  $w_t(e)=2\mu\,(1-\mu)(\eta_{t-}(u)-\eta_{t-}(v))^2$ is lost along the edge, see (\ref{loss}).
  If $|\eta_{t-}(u)-\eta_{t-}(v)|\in(\delta,\theta]$, such an increase of $W_t^\text{loss}(e)$ would be at least
  $2\mu\,(1-\mu)\,\delta^2$. The opinion values of $u$ and $v$ can only change if one
  of the $4d-1$ edges incident to either $u$ or $v$ experiences a Poisson event. 
  Given $|\eta_{s}(u)-\eta_{s}(v)|\in(\delta,\theta]$ for some fixed time $s$, the probability that it is in
  fact $e$ where the first Poisson event after time $s$ on an edge incident to either $u$ or $v$ occurs
  is $\tfrac{1}{4d-1}$. 
  
  By the extended version of the Borel-Cantelli-Lemma (involving conditional probabilities,
  see e.g.\ Cor.\ 6.20 in \cite{Foundations}) such an increase will happen infinitely often,
  if $|\eta_t(u)-\eta_t(v)|\in(\delta,\theta]$ for arbitrarily large $t$,
  forcing $(W_t^\text{loss}(e))_{t\geq0}$ to diverge. This cannot happen with positive probability, since
  according to Lemma \ref{avpre} we have
  $\E[W_t^\text{loss}(e)]\leq2\, \E[W^\text{tot}_t(v)]=2\, \E[W_0(\mathbf{0})]\leq 2\,b^2$.
  Hence, it follows that a.s.\ $|\eta_t(u)-\eta_t(v)|\notin(\delta,\theta]$ for sufficiently
  large $t$. 
  
  For small values of $\delta$, more precisely $\delta<\tfrac\theta2$, the margin 
  $|\eta_t(u)-\eta_t(v)|$ cannot jump back and forth between $[0,\delta]$ and $(\theta,b]$,
  since single updates can change the value at any site by no more than 
  $\mu\theta\leq\tfrac\theta2$. Consequently, for $0<\delta<\tfrac\theta2$, the following
  holds almost surely:
  $$\limsup_{t\to\infty}|\eta_t(u)-\eta_t(v)|\in[0,\delta]\quad\text{or}\quad
    \liminf_{t\to\infty}|\eta_t(u)-\eta_t(v)|\in(\theta,b].$$   
  For $\delta$ can be chosen arbitrary small and there are only countably many edges,
  the claim is established.
\end{proof}\vspace*{1em}

\begin{lemma}\label{0-1}
The probability that there will be finally blocked edges is either $0$ or $1$.
\end{lemma}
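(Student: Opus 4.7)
The plan is to obtain the statement as a consequence of ergodicity, by noting that the event
$$A:=\{\text{there exist finally blocked edges}\}=\bigcup_{e=\langle u,v\rangle\in E}\bigl\{|\eta_t(u)-\eta_t(v)|>\theta\text{ for all }t\text{ large enough}\bigr\}$$
is invariant under the shifts $T_1,\dots,T_d$ acting on the underlying probability space.

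First I would make precise the probability space: the entire evolution of the Deffuant model is a deterministic (measurable) functional of the pair $\bigl(\{\eta_0(v)\}_{v\in\Z^d},\{N_e\}_{e\in E}\bigr)$, where $\{N_e\}_{e\in E}$ is a family of i.i.d.\ unit rate Poisson processes, independent of the initial configuration. Denote the joint law by $\Prob$. The shifts $T_i$ act on both coordinates in the obvious way (translating the configuration and relabelling the edge processes). By assumption $\{\eta_0(v)\}_{v\in\Z^d}$ is stationary and ergodic with respect to each $T_i$, while the i.i.d.\ family $\{N_e\}_{e\in E}$ is stationary and in fact mixing with respect to the induced shifts on $E$.

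The second step is to observe that the product of an ergodic stationary system with an independent mixing stationary system is again ergodic (a standard consequence of the approximation of invariant events by cylinder events, together with independence); hence the joint system is stationary and ergodic under each $T_i$. Since $A$ is expressible as a measurable event depending on the Deffuant evolution, and is manifestly invariant under every $T_i$ (translating a configuration carrying a finally blocked edge at $\langle u,v\rangle$ produces a configuration with a finally blocked edge at $\langle T_i u,T_i v\rangle$, and the property of \emph{existing} is preserved in both directions), the ergodic 0-1 law yields $\Prob(A)\in\{0,1\}$.

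The main thing to watch is the joint ergodicity claim; an alternative route avoiding any subtlety is to directly argue: by Lemma \ref{asbeh}, the indicator $\mathbbm{1}_{\{\langle \mathbf{0},\mathbf{0}+e_i\rangle\text{ finally blocked}\}}$ is almost surely well-defined, and $A=\{T_i\text{-ergodic averages of such indicators are positive for some }i\}$. Ergodicity of $\Prob$ with respect to $T_i$ forces these averages to converge a.s.\ to a constant, so the event that the limit is positive has probability $0$ or $1$, which is exactly the claim.
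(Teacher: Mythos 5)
Your main argument is correct and rests on the same underlying fact as the paper's, but it is packaged differently. You identify $A=\{\text{there exist finally blocked edges}\}$ as a shift-invariant event of the joint system (initial configuration $\times$ i.i.d.\ Poisson processes), check that this joint system is ergodic because the Poisson part is a Bernoulli-type, hence mixing, factor independent of the ergodic initial configuration, and invoke the 0-1 law for invariant events. The paper instead argues by cases on $p:=\Prob(e\text{ is finally blocked})$ for a single edge: if $p=0$, stationarity and a countable union bound give $\Prob(A)=0$; if $p>0$, the multivariate pointwise ergodic theorem applied to the number $N(v)$ of finally blocked edges incident to $v$ shows the spatial density of blocked edges is a.s.\ equal to $2dp>0$, so $\Prob(A)=1$. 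Your route is more economical; the paper's buys the extra information that in the non-consensus case there is a.s.\ a \emph{positive density} (in particular infinitely many) of finally blocked edges, which is the form actually used later. One small imprecision in your "alternative route": the identity $A=\{\text{ergodic averages of the blocking indicators are positive}\}$ is not an exact equality of events, since a configuration could in principle have finitely many blocked edges and hence zero density while still belonging to $A$; the correct way to close this (and the way the paper does it) is to note that zero density forces $p=0$, and then the union bound over the countably many edges rules out even a single blocked edge almost surely. Your primary argument via the invariant-event 0-1 law does not suffer from this issue.
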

\begin{proof}
Fix an edge $e=\langle u,v\rangle$ and assume that $\Prob(e\text{ is finally blocked})=0$.
By translation invariance of the process, this has to be true for all edges $e\in E$.
The union bound together with the preceeding lemma gives:
$$\Prob(\lim_{t\to\infty}|\eta_t(u)-\eta_t(v)|=0\ \forall\, u,v\in\Z^d)=1.$$
For $\Prob(e\text{ is finally blocked})>0$, let $N(v)$ denotes the number of edges incident to
site $v$ that are finally blocked. Then the ergodicity of $\{\eta_0(v)\}_{v\in\Z^d}$ and the
independent Poisson processes attributed to the edges with respect to shifts, forces that almost
surely the following holds (using Zygmund's Ergodic Theorem):
\begin{equation*}
      \lim_{n\to\infty}\frac{1}{|\Lambda_n|}\sum_{v\in \Lambda_n}N(v)
      =\E[N(\mathbf 0)]=2d\cdot\Prob(e\text{ is finally blocked})>0.
     \end{equation*}
Hence, with probability 1 infinitely many edges will be finally blocked.
\end{proof}\vspace*{1em}

\noindent
Having derived these auxiliary results, we can proceed to prove the main result of
this section:\vspace*{1em}

\begin{nproof}{of Theorem \ref{Zd}}
\begin{enumerate}[(a)]
\item
Given some confidence bound $\theta\geq\tfrac12$, the value at every vertex which is incident
to a finally blocked edge must be finally located in $[0,1-\theta)\cup(\theta,1]$. 
Due to Lemma \ref{asbeh} this holds for every vertex almost surely if there are edges
which are finally blocked. The foregoing lemma tells us, that if an edge is
finally blocked with positive probability, we get
\begin{equation}\label{liminf}
\liminf_{t\to\infty}|\eta_t(v)-\tfrac12|\geq \theta-\tfrac12\text{ for all }v\in\Z^d\text{ a.s.}
\end{equation}
Choosing the energy function $\En: x\mapsto|x-\tfrac12|$ and applying Lemma \ref{avpre}
we find:
   \begin{align*}
    \E\big[\liminf_{t\to\infty}W_t(v)\big]&=\E\big[\liminf_{t\to\infty}|\eta_t(v)-\tfrac12|\big]\\
    &\leq\liminf_{t\to\infty}\E\big[|\eta_t(v)-\tfrac12|\big]\\
    &\leq\liminf_{t\to\infty} \E[W^\text{tot}_{t}(v)]\\&= \E[W^\text{tot}_{0}(v)]=\tfrac14,
   \end{align*}
where Fatou's Lemma was used in the first inequality and the non-negativity of $W_t^\text{loss}(e)$ in the second.
If we assume $\Prob(e\text{ is finally blocked})>0$ for some, hence any $e$, the first expectation
must be at least $\theta-\tfrac12$ by (\ref{liminf}), which leads to a contradiction if $\theta$ is larger than $\tfrac34$.
\item Note that no special feature of $\text{\upshape unif}([0,1])$ was used, but 
   $\E\big[|\eta_0-\tfrac12|\big]=\tfrac14$.
   Consequently, the above result still holds if $\text{\upshape unif}([0,1])$ is replaced by 
   some other distribution $\mathcal{L}(\eta_0)$ on $[0,1]$ and the bound $\tfrac34$ replaced by 
   $\E\big[|\eta_0-\tfrac 12|\big]+\tfrac 12$ simultaneously. Furthermore, this bound is non-trivial, 
   i.e.\ less than 1, provided $\Prob(\eta_0\in\{0,1\})<1$ for this implies
   $\E\big[|\eta_0-\tfrac 12|\big]<\tfrac12$. If however $\eta_0\in\{0,1\}$ almost surely, trivially only
   $\theta=1$ will not allow for finally blocked edges, given $\eta_0$ is not a.s.\ constant.
\end{enumerate}\vspace*{-1em}
\end{nproof}\vspace*{-1em}

\begin{remark}
\begin{enumerate}[(a)]
\item 
   There are two major differences to the results on $\Z$. Firstly, even if intuitively appealing it is no
   longer ensured that weak consensus as described in Theorem \ref{Zd} will lead to consensus in the strong sense, i.e.\
   that every individual value converges to the mean. By ergodicity we know
   \begin{equation*}
      \lim_{n\to\infty}\frac{1}{|\Lambda_n|}\sum_{v\in \Lambda_n}\mathbbm{1}_{\{\lim_{t\to\infty}\limits\eta_t(v) 
      \text{ exists}\}}=\Prob\big(\lim_{t\to\infty}\eta_t({\mathbf 0})\text{ exists}\big).
     \end{equation*}
   In the case of consensus, the indicator functions on the left hand side are either all $0$ or all $1$.
   In other words, for $\theta$ such that weak consensus is guaranteed, the existence of the limits is an
   event with probability either $0$ or $1$. In the latter case another application of ergodicity and dominated
   convergence show that this limit must be the mean of the initial distribution:
   \begin{align*}
      \lim_{t\to\infty}\eta_t(v)&=\lim_{n\to\infty}\frac{1}{|\Lambda_n|}
      \sum_{u\in \Lambda_n}\lim_{t\to\infty}\eta_t(u)\\
      &=\E\big[\lim_{t\to\infty}\eta_t(v)\big]\\
      &=\lim_{t\to\infty}\E[\eta_t(v)]=\E[\eta_0(v)],
     \end{align*}
   where the first equality follows from weak consensus, the last is Lemma \ref{avpre} with the identity
   as energy function.
   
   Secondly, it is no longer clear that we can talk about a critical value for $\theta$ separating the parameter
   space neatly into a sub- and a supercritical regime, since final consensus is not
   necessarily an increasing event in $\theta$. By Lemma \ref{0-1} it is clear that for fixed $\theta$ we have
   that all neighbors finally concur with probability either $0$ or $1$. Hence both cases can not occur
   simultaneously but there might be a range for $\theta$ in which they alternate, unlike in the case of $\Z$.
\item 
   Let us next consider another example. Taking for instance $\text{\upshape unif}(\{0,\tfrac12,1\})$
   as distribution of the initial values,
   the reasoning in part (b) of the theorem shows  that finally blocked edges are in this case only possible for
   $$\theta\leq\E\big[|\eta_0-\tfrac 12|\big]+\tfrac12=\tfrac13+\tfrac12=\tfrac56.$$ 
   For other distributions it might even be beneficial to choose some different convex energy 
   function giving a potentially sharper bound on $\theta\geq\tfrac 12$ of the kind:
   The probability for finally blocked edges can only be non-zero for $\theta$ such that
   \begin{equation*}
   \inf\{\En(x),\; x\in[0,1-\theta)\cup(\theta,1]\}\leq\E\big[\En(\eta_0)\big].
   \end{equation*}
   Clearly, this inequality is trivial if the minimal value $\min\{\En(x),\; x\in[0,1]\}$
   is attained on $[0,1-\theta)\cup(\theta,1]$. If this is not the case, it reads 
   \begin{equation}\label{bound}
    \min\{\En(1-\theta),\En(\theta)\}\leq\E\big[\En(\eta_0)\big],
   \end{equation}
   due to the convexity of $\En$. Choosing $\En$ such that it vanishes on the
   support of $\mathcal{L}(\eta_0)$ will only give the trivial bound $\theta\leq\tfrac12+\sup
   \{|x-\tfrac12|,\; x\in \supp(\mathcal{L}(\eta_0))\}$.
   
   In addition, Jensen's inequality tells us that regardless of the chosen convex energy
   function, from (\ref{bound}) we cannot get a bound on $\theta$ so sharp that
   $\E\eta_0\notin(1-\theta,\theta)$. Since in this case we trivially have 
    \begin{equation*}
   \inf\{\En(x),\; x\in[0,1-\theta)\cup(\theta,1]\}\leq\En\big(\E\eta_0\big)
   \leq\E\big[\En(\eta_0)\big].
   \end{equation*}
   Finally, a gap in the distribution of $\eta_0$ also reduces the scope of (\ref{bound}), since for
   $\Prob(\eta_0\in(1-\theta,\theta))=0$ we get: 
   $$\En(\eta_0)\geq\inf\{\En(x),\; x\in[0,1-\theta)\cup(\theta,1]\}\text{ a.s.}$$
   This trivially implies the above inequality.
   
   In summary, the same factors obstructing consensus in the Deffuant model on $\Z$ reappear in this
   treatment of the higher-dimensional case (cf.\ part (a) of Theorem \ref{gen}).
   
\item Next, it is worth noting that the energy function chosen in the proof of Theorem \ref{Zd} is in fact
   best possible regarding (\ref{bound}) for symmetric distributions. If $\En$ is rescaled by some positive
   factor or translated by adding a constant, the inequality (\ref{bound}) stays unchanged. As the
   inequality is symmetric around $\tfrac12$ for symmetric distributions, it holds for the pair
   $(x\mapsto\En(x),\theta)$ if and only if it holds for $(x\mapsto\En(1-x),\theta)$. 
   A symmetrization of the kind $\tilde{\En}(x)=\tfrac12\,(\En(x)+\En(1-x))$
   will thus not change the right-hand side and at most increase the left-hand side if 
   $\En(\theta)\neq\En(1-\theta)$, making the condition only stricter.
   
   Therefore, an energy function giving the best bound on parameters $\theta$ allowing for finally
   blocked edges through (\ref{bound}) can be assumed to be symmetric on $[0,1]$ and having the image
   set $[0,\tfrac12]$. Set $X:=\tfrac12+|\eta_0-\tfrac12|$, a $[\tfrac12,1]$-valued random variable, 
   which by the symmetry of $\eta_0$ implies $\E[\En(X)]=\E[\En(\eta_0)]$. 
   The largest $\theta$ satisfying (\ref{bound}) is then the unique one (larger than $\tfrac12$) for which
   $\En(\theta)=\E[\En(\eta_0)]$. Note that the convexity of the energy function
   forces it to be strictly monotonous where it is not attaining its minimum, which is 0, and a
   choice such that $\En(\eta_0)=0$ a.s.\ will only give a trivial bound on $\theta$ as discussed
   above.
   
   Another look at Jensen's inequality tells us that $\E[\En(X)]\geq
   \En(\E X)$, with strict inequality if $\En$ is not linear on $\supp(\mathcal{L}(X))$.
   If this inequality is strict, larger values for $\theta$ than $\E X$ will also satisfy (\ref{bound}).
   Being linear on $\supp(\mathcal{L}(X))$ and convex means being linear at least on the smallest interval 
   containing the support, i.e.\ $I:=\text{conv}(\supp(\mathcal{L}(X)))$. How $\En$ is defined
   on $[\tfrac12,1]\setminus I$ is irrelevant, so we may assume it to be linear on all of $[\tfrac12,1]$.
   The assumptions on symmetry and image set finally force $\En$ to be the function $x\mapsto|x-\tfrac12|$.
 \vspace{0.3cm}
 \begin{figure}[H]
     \hspace*{8.0cm}
     \unitlength=0.80mm
     \begin{picture}(50.00,37.00)
          {\color[rgb]{.4,.4,.4}
          \put(-2.00,0.00){\vector(1,0){51.00}}
          \put(-1.50,10.00){\line(1,0){3.00}}
          \put(-1.50,20.00){\line(1,0){3.00}}
          \put(0.00,-2.00){\vector(0,1){27.00}}
          \put(30.00,-1.50){\line(0,1){3.00}}
          \put(45.00,-1.50){\line(0,1){3.00}}
          \put(-4.30,-5.00){$\scriptstyle 0$}
          \put(-5.00,18.90){$\scriptscriptstyle\tfrac12$}
          \put(-5.00,8.90){$\scriptscriptstyle\tfrac14$}
          \put(28.70,-6.00){$\scriptscriptstyle\tfrac23$}
          \put(44.20,-5.00){$\scriptstyle 1$}}

          \put(0.00,10.00){\line(3,-1){30.00}}
          \put(30.00,0.00){\line(3,4){15.00}}
          \put(37.00,15.00){$\En$}
          {\color[rgb]{0,0,1} \linethickness{2pt}
          \put(5.00,12.00){$\scriptstyle\mathcal{L}(\eta_0)$}
          \put(0.00,0.00){\line(0,1){13.33}}
          \put(30.00,0.00){\line(0,1){20.00}}
          \put(45.00,0.00){\line(0,1){6.66}}}
     \end{picture}
  \end{figure}\vspace{-3.85cm}
  
\item In the case of an asymmetric distribution of $\eta_0$ there are actually better choices.
   \par
   \begingroup
   \rightskip14em
   Consider the example sketched on the right, where
   $\Prob(\eta_0=0)=\tfrac13,\ \Prob(\eta_0=\tfrac23)=\tfrac12$,
   $\Prob(\eta_0=1)=\tfrac16$, and the energy function is piecewise linear as shown.

   Taking $x\mapsto|x-\tfrac12|$ as energy function shows via (\ref{bound}) that finally
   blocked edges are only possible for \par\endgroup
   \vspace{-1em}
   $$\theta\leq\E\big[|\eta_0-\tfrac 12|\big]+\tfrac12=\tfrac12\,(\tfrac12+\tfrac16)+\tfrac12=\tfrac56.$$
   Taking $\En$ piecewise linear with $\En(0)=\tfrac14,\ \En(\tfrac23)=0\text{ and }\En(1)=\tfrac12$ gives
   in turn $\E[\En(\eta_0)]=\tfrac16=\En(\tfrac29)=\En(\tfrac79)$, hence a.s.\ no blocked edges
   for $\theta>\tfrac79$, which is slightly better.

   Note however that for every convex $\En$ there are always linear functions $l_1,\ l_2$ such that
   $l_1(1-\theta)=\En(1-\theta),\ l_2(\theta)=\En(\theta)$ and $l_1,l_2\leq\En$. Taking their maximum
   will give a convex function leaving the left-hand side of (\ref{bound}) unchanged and at most decreasing
   the right-hand side. By an appropriate affine transformation $y\mapsto a\,y+c,\ a>0$ this function
   can be altered to have image set $[0,\tfrac12]$ without changing the condition on $\theta$ that follows
   from (\ref{bound}) as mentioned above.
   Consequently, the sharpest bound using (\ref{bound}) will even in the asymmetric case always be
   established by some piecewise linear function with only one bend mapping to $[0,\tfrac12]$ as in
   the example.

\item It is worth remarking, that the bounds coming from (\ref{bound}) applied to the model with i.i.d.
   initial opinions on $\Z$ are a lot closer to the truth for centered distributions.

   The best we can come up with for the uniform case is $\tfrac34$ and for $\text{\upshape unif}(\{0,\tfrac12,1\})$
   even $\tfrac56$, whereas Theorem \ref{gen} tells us that on $\Z$ the actual bound on $\theta$ to
   allow for finally blocked edges is $\tfrac12$ in either case. In the asymmetric example from above, we get the
   bound $\theta\leq\tfrac79$ which is not too far off its critical value $\theta_\text{c}=\tfrac23$ on $\Z$. 

   For a distribution of $\eta_0$ which is really concentrated around the mean, e.g.\ 
   $\Prob(\eta_0=0)=\Prob(\eta_0=1)=\tfrac1n,\ \Prob(\eta_0=\tfrac12)=\tfrac{n-2}{n}$, with $n$ large, the bound
   derived using $x\mapsto|x-\tfrac12|$ as energy function is 
   $\theta\leq\E\big[|\eta_0-\tfrac 12|\big]+\tfrac12=\tfrac1n+\tfrac12$. The corresponding critical value 
   on $\Z$ according to Theorem \ref{gen} is again $\tfrac12$, hence quite well approximated.

   That we get the right answer for a non-constant distribution concentrated on $\{0,1\}$ is due to
   the huge gap. For a slightly changed symmetric version, i.e.\ 
   $\Prob(\eta_0=0)=\Prob(\eta_0=1)=\tfrac{n-1}{2n},\ \Prob(\eta_0=\tfrac12)=\tfrac1n$, again $n$ large,
   however, the best bound we get following the reasoning of the above theorem is
   $$\theta\leq\E\big[|\eta_0-\tfrac 12|\big]+\tfrac12=\tfrac12\cdot\tfrac{n-1}{n}+\tfrac12=1-\tfrac{1}{2n}$$
   and this is far off the true value on $\Z$, which is once more $\theta_\text{c}=\tfrac12$.

\item As in Theorem \ref{gen}, the general case where the initial distribution's support is contained in
   $[a,b],\ a<b\in\R$, can be treated by appropriate translation and scaling.
\end{enumerate}
\end{remark}
\noindent
     In conclusion, the results from Section \ref{sec2} show that for $d=1$ and a sequence of initial
     values satisfying the finite energy condition (see Definition \ref{finen}), there exists a critical parameter
     $\theta_\text{c}$ (which is $\tfrac12$ in the standard uniform case) at which a phase transition
     from no consensus to strong consensus takes place. Strictly weak consensus could only exist for the
     unsolved case of $\theta=\theta_\text{c}$.

     Theorem \ref{Zd} states that the case of no consensus is impossible for initial marginal distributions
     that attribute a positive probability to $(0,1)$ and $\theta$ large enough ($\tfrac34$ in the
     uniform case).

\begin{remark}\label{amenable}
   The results from Theorem \ref{Zd} can actually be generalized from the grid $\Z^d$ to any infinite,
   locally finite, transitive and amenable (connected) graph $G=(V,E)$. In this generality, the configuration
   of initial opinions would have to be ergodic with respect to the graph automorphisms instead of shifts,
   of course.
   
   Recall that a graph is called locally finite if every vertex has a finite degree, which together with the
   regularity of a transitive graph implies bounded degree. A graph is called {\em amenable} if there exists a
   sequence $(F_n)_{n\in\N}$ of finite sets such that the ratio of boundary and volume $\tfrac{|\partial_E F_n|}{|F_n|}$
   tends to $0$ as $n\to\infty$. Such sequences are called {\em F\o lner sequences}. 
   
   In the case of an infinite, locally finite, transitive and amenable connected graph, we can choose the
   F\o lner sequence $(F_n)_{n\in\N}$ as an increasing set sequence with $\bigcup_{n\in\N}F_n=V$; see the
   appendix of \cite{amenable} for further details. As a replacement for Zygmund's ergodic theorem, we can
   then use the mean ergodic theorem for $L^2$-functions which can be found as Thm.\ A.5 in \cite{amenable},
   with $(F_n)_{n\in\N}$ stepping in for $(\Lambda_n)_{n\in\N}$:
   $$\lim_{n\to\infty}\frac{1}{|F_n|}\sum_{v\in F_n}W^\text{tot}_t(v)=\E[W^\text{tot}_t(\mathbf{0})]\quad\text{in }L^2,$$
   where $\mathbf{0}$ is some fixed vertex of $G$.
   It is not a problem that this result only gives $L^2$-convergence instead of almost sure convergence,
   since $L^2$-convergence is stronger than convergence in probability and the latter implies almost sure convergence
   of a subsequence, which is enough for our purposes.
\end{remark}    

\subsection{Consequences in terms of stochastic dominance}

From the area of probabilistic risk analysis the following orders of stochastic dominance are known,
which make it possible to rewrite the results from the foregoing subsection obtained by using energy arguments
in a nice way.
\begin{definition}
Let $X,Y$ be two random variables with finite expectation and $\mathcal{F}_{\text{cx}}$ 
denote the set of all convex, $\mathcal{F}_{\text{icx}}$ the set of all increasing convex functions on $\R$.
\begin{enumerate}[(i)]
\item $X$ is said to be {\itshape smaller than} $Y$ {\itshape in the usual stochastic order}, commonly denoted by 
 $X\leq_{\text{st}}Y$, if for all $a\in\R$:
 $$\Prob(X>a)\leq\Prob(Y>a).$$
\item $X$ is said to be {\itshape smaller than} $Y$ {\itshape in the convex order}, commonly denoted by 
 $X\leq_{\text{cx}}Y$, if for all functions $\phi\in\mathcal{F}_{\text{cx}}$
 for which the corresponding expectations exist:
 $$\E[\phi(X)]\leq\E[\phi(Y)].$$
\item $X$ is said to be {\itshape smaller than} $Y$ {\itshape in the increasing
 convex order}, commonly denoted by $X\leq_{\text{icx}}Y$, if for all functions $\phi\in\mathcal{F}_{\text{icx}}$
 for which the corresponding expectations exist:
 $$\E[\phi(X)]\leq\E[\phi(Y)].$$
\end{enumerate}
\end{definition}
\noindent
It is obvious from the definition that $\leq_{\text{\upshape cx}}$ implies $\leq_{\text{\upshape icx}}$.
Furthermore, the converse is true, if the expectations of both random variables coincide, i.e.
$$X\leq_{\text{cx}}Y\ \Leftrightarrow\ X\leq_{\text{icx}}Y\text{ and }\E X=\E Y,$$
see for example Thm.\ 4.A.35 in \cite{Orders}.

An easy coupling argument (using quantile transformation) shows that $\leq_{\text{\upshape st}}$ implies $\leq_{\text{\upshape icx}}$.

\begin{proposition}
Let $(\eta_t(v))_{t\geq 0}$ denote the piecewise constant jump process describing the value at some fixed
vertex $v\in\Z^d$ throughout time, as before. Furthermore, let the initial values again be distributed
on $[0,b]$ and $\E\eta_0$ be the corresponding expected value.

For any two points in time $0\leq s\leq t$, we have $\eta_t(v)\leq_{\text{\upshape cx}}\eta_s(v)$.
This in turn directly implies $|\eta_t(v)-\E\eta_0|\leq_{\text{\upshape icx}}|\eta_s(v)-\E\eta_0|$.
\end{proposition}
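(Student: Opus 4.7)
The plan is to boost Lemma \ref{avpre} by applying it with a shifted time origin. Fix $0\leq s\leq t$ and an arbitrary convex $\phi\colon[0,b]\to\R$; after adding a constant (which changes nothing in the desired inequality) I may assume $\phi\geq 0$, so that it fits the framework of the lemma.

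First I would note that the time-$s$ configuration $\{\eta_s(u)\}_{u\in\Z^d}$ is itself stationary and ergodic under the shifts $T_i$: the Deffuant dynamics can be realized as a shift-equivariant functional of the initial field together with the independent edge Poisson clocks, both of which are shift-ergodic, so $\{\eta_s(u)\}_u$ inherits stationarity and ergodicity. Hence the process restarted at time $s$ satisfies the standing hypotheses of Lemma \ref{avpre}. Applying that lemma to the restarted process with energy function $\En=\phi$ yields
\[
  \E[W^\text{tot}_{t-s}(v)]=\E[\phi(\eta_s(v))],
\]
using translation invariance on the right-hand side. Since $W^\text{tot}_{t-s}(v)\geq W_{t-s}(v)=\phi(\eta_t(v))$ by (\ref{tot}) and the non-negativity of $W^\text{loss}$, taking expectations gives $\E[\phi(\eta_t(v))]\leq\E[\phi(\eta_s(v))]$, which is precisely $\eta_t(v)\leq_{\text{cx}}\eta_s(v)$.

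The second assertion is then immediate: for any increasing convex $\psi$ on $[0,\infty)$, the map $x\mapsto\psi(|x-\E\eta_0|)$ is convex on $[0,b]$ (composition of an increasing convex function with a convex one), so the convex order just proved, applied to this function, yields $\E[\psi(|\eta_t(v)-\E\eta_0|)]\leq\E[\psi(|\eta_s(v)-\E\eta_0|)]$, which is the claimed increasing convex order. I do not foresee a genuine obstacle; the only care needed is the routine transfer of stationarity and ergodicity from the time-$0$ configuration to the time-$s$ configuration, after which Lemma \ref{avpre} does all the work.
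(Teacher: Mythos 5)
Your proof is correct and rests on exactly the same mechanism as the paper's: Lemma \ref{avpre} applied with the (normalized, non-negative) arbitrary convex function as energy function, together with the non-negativity of the accumulated loss terms, followed by the same composition argument for the increasing convex order. The only cosmetic difference is that you restart the dynamics at time $s$ and invoke the lemma once for the restarted process (which requires the routine check, which you duly flag, that the time-$s$ configuration inherits stationarity and ergodicity and that the post-$s$ clocks are fresh and independent), whereas the paper applies the lemma to the original process at both times $s$ and $t$ and uses that $W_t^{\text{loss}}(e)$ is non-decreasing in $t$ --- the two bookkeepings are equivalent.
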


\begin{proof}
First of all, it is worth remarking that the partial orders $\leq_{\text{cx}}$ and $\leq_{\text{icx}}$ are
actually defined on the set of distributions and do therefore not depend on a random variable $X$ itself but
rather on $\mathcal{L}(X)$. The distribution of $\eta_t(v)$ is by symmetry the same for every $v\in\Z^d$, hence
it is enough to consider one fixed vertex.

Let $\phi$ be a convex function on $\R$. For every $t\geq0$ the random variable $\eta_t(v)$ lies in $[0,b]$ 
and since convexity implies continuity on closed intervals, $\phi$ attains its minimum
$$c:=\min\big\{\phi(x),\;x\in[0,b]\big\}.$$ 
Hence $\En:x\mapsto\phi(x)-c$ is a non-negative convex function on $[0,b]$ and therefore a proper choice as
energy function as outlined in the beginning of the foregoing subsection.

Let $W_t(v)=\En(\eta_t(v))$ denote the energy attributed to the chosen vertex at time $t$ and
$W^\text{tot}_t(v)=W_t(v)+\frac12\sum_{e\in E(v)}W_t^\text{loss}(e)$ its total energy, just as in (\ref{tot}).
Lemma \ref{avpre} tells us that $\E[W^\text{tot}_t(v)]=\E[W_0(v)]$ for all $t\geq0$ and the fact that
$(W_t^\text{loss}(e))_{t\geq0}$ is non-decreasing and non-negative for every edge $e$ gives accordingly 
\begin{align*}\E[W_t(v)]&=\E[W^\text{tot}_t(v)]-\frac12\sum_{e\in E(v)}\E[W_t^\text{loss}(e)]\\
&\leq\E[W^\text{tot}_s(v)]-\frac12\sum_{e\in E(v)}\E[W_s^\text{loss}(e)]=\E[W_s(v)]\\
&\leq\E[W_0(v)]\quad \text{for }0\leq s\leq t.
\end{align*}
If we plug in the special form of $\En$ chosen above (and add $c$ along the chain of inequalities) this reads:
$$\E\big[\phi(\eta_t(v))\big]\leq\E\big[\phi(\eta_s(v))\big]
\quad\Big(\,\leq\E\big[\phi(\eta_0(v))\big]\,\Big).$$
Since $\phi\in\mathcal{F}_{\text{cx}}$ was arbitrary, this proves the first part of the claim.

To see that $(|\eta_t(v)-\E\eta_0|)_{t\geq 0}$ is a non-increasing sequence with respect to $\leq_{\text{\upshape icx}}$
one only has to note that the function $x\mapsto|x-\E\eta_0|$ is convex. A short moment's thought reveals that
the composition of an increasing convex with a convex function is again convex. Thus, for 
$\phi\in\mathcal{F}_{\text{icx}}$ the already proved part applied to the function $x\mapsto\phi(|x-\E\eta_0|)$ provides
$$\E\big[\phi(|\eta_t(v)-\E\eta_0|)\big]\leq\E\big[\phi(|\eta_s(v)-\E\eta_0|)\big],$$
which in turn proves $|\eta_t(v)-\E\eta_0|\leq_{\text{icx}}|\eta_s(v)-\E\eta_0|$.
\end{proof}\vspace*{1em}

\noindent
This proposition in hand makes it possible to reprove the result from Theorem \ref{Zd}:
Already in 1979, Meilijson and Nádas \cite{olderthanMori} showed that $Y\leq_{\text{\upshape icx}}X$ implies
$Y\leq_{\text{\upshape st}}h_{\mathcal{L}(X)}(X)$, where the function $h_\mu$ denotes the mean residual
life of a random variable with distribution $\mu$, i.e.:
$$\text{For }Z\sim\mu \text{ and } t\in\R \text{ s.t. }\mu\big((t,\infty)\big)>0:\ h_\mu(t):=\E[Z\,|\,Z>t].$$
Having the initial distribution $\mathcal{L}(\eta_0)=\text{unif}([0,1])$ means 
$|\eta_0-\tfrac12|\sim\text{unif}([0,\tfrac12])$, which gives
$$h_{\text{unif}([0,{^1\!/\,\!_2}])}(t)=\tfrac14+\tfrac{t}{2}.$$
Consequently, we get $|\eta_t-\tfrac12|\leq_{\text{\upshape st}}\tfrac14+\tfrac{Z}{2}$, where
$Z\sim\text{unif}([0,\tfrac12])$, another contradiction to (\ref{liminf}) if $\theta>\tfrac34$.\\[1em]
\noindent
That the processes $(\eta_t(v))_{t\geq0}$ are non-increasing in the convex order renders it possible to
conclude convergence in distribution. This however is far from the almost sure convergence derived in
the one-dimensional case.

\begin{proposition}
Let $(\eta_t(v))_{t\geq 0}$ be as before. There exists a $[0,b]$-valued random variable $\eta_\infty$
such that $\eta_t(v)\law\eta_\infty$ for every $v\in\Z^d$.
\end{proposition}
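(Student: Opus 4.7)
The plan is to exploit the monotonicity in the convex order established in the previous proposition, combined with tightness, to pass to a distributional limit. Since $\eta_t(v)$ always lives in the compact interval $[0,b]$, the family $\{\mathcal{L}(\eta_t(v))\}_{t\geq 0}$ is automatically tight, so Prokhorov's theorem guarantees that along any sequence $t_n\to\infty$ there is a subsequence along which $\eta_{t_n}(v)$ converges in distribution to some limit. The task then reduces to showing that every such subsequential limit has the same distribution, which will upgrade the subsequential convergence to full convergence.

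Step one is to observe that by the previous proposition, for every convex $\phi:\R\to\R$ the function $t\mapsto\E[\phi(\eta_t(v))]$ is non-increasing. Since any such $\phi$ is continuous and hence bounded on the compact interval $[0,b]$, this monotone sequence is also bounded below, so the limit
$$L(\phi):=\lim_{t\to\infty}\E[\phi(\eta_t(v))]$$
exists and is finite for every convex $\phi$. In particular, any two subsequential weak limits $X_1,X_2$ of $\eta_t(v)$ satisfy $\E[\phi(X_1)]=\E[\phi(X_2)]=L(\phi)$ for every convex $\phi$ that is continuous on $[0,b]$.

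Step two is the density argument that turns agreement on convex functions into agreement of distributions. Every continuous $f:[0,b]\to\R$ can be uniformly approximated by polynomials by Weierstrass, and any polynomial $p$ can be written as $p(x)=(p(x)+Cx^2)-Cx^2$ with $C$ chosen large enough that $p''+2C\geq 0$ on $[0,b]$; both summands are then convex. Consequently the space of differences of convex functions is uniformly dense in $C([0,b])$, so $\E[f(X_1)]=\E[f(X_2)]$ for all $f\in C([0,b])$, which forces $X_1$ and $X_2$ to have the same law. Thus all subsequential weak limits coincide, and $\eta_t(v)$ converges in distribution to some common $[0,b]$-valued random variable $\eta_\infty(v)$. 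Finally, because the underlying process is translation invariant (the initial configuration is stationary and the edge Poisson processes are i.i.d.), the law of $\eta_t(v)$ does not depend on $v$ for any fixed $t$, so the same $\eta_\infty$ works for every $v\in\Z^d$.

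The only point requiring a little care is the density step: one must check that the pointwise ordering of expectations against convex test functions really characterises the distribution on $[0,b]$, as one could worry that differences of convex functions fail to be dense. The polynomial-plus-$Cx^2$ trick above takes care of this cleanly and is the key non-tautological ingredient; everything else is a standard application of compactness and tightness.
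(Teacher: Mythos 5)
Your proof is correct, but it takes a genuinely different route from the paper. The paper invokes Kellerer's theorem to realize the family $\{\mathcal{L}(\eta_t(v))\}_{t\geq0}$, which is non-increasing in the convex order, as the marginals of a supermartingale; since the mean is constant (Lemma \ref{avpre} with the identity as energy function), that supermartingale is in fact a martingale, and Doob's martingale convergence theorem yields almost sure convergence of this auxiliary process and hence convergence in distribution of the marginals. You instead argue directly on the level of laws: compactness of $[0,b]$ gives tightness and subsequential weak limits via Prokhorov, monotonicity of $t\mapsto\E[\phi(\eta_t(v))]$ for convex $\phi$ pins down $\E[\phi(\cdot)]$ of every subsequential limit, and the decomposition $p=(p+Cx^2)-Cx^2$ of polynomials into differences of convex functions, combined with Weierstrass approximation, shows that agreement against all convex test functions determines the law on $[0,b]$. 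Your approach is more elementary and self-contained, avoiding the fairly deep Kellerer realization theorem; the paper's approach is shorter given the citation and additionally exhibits an explicit martingale carrying the same marginals (though, as in your argument, only convergence in distribution -- not almost sure convergence of the Deffuant process itself -- is obtained either way). One could streamline your density step by testing only against the convex functions $x\mapsto(x-a)^+$, whose expectations together with the (constant) mean already characterize a distribution on $[0,b]$, but the polynomial trick is perfectly sound. Your closing remark that translation invariance makes the limit law independent of $v$ matches the paper's appeal to symmetry.
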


\begin{proof}
Again, symmetry ensures that if the statement holds true for some vertex $v$ it is valid for all such.
Building on a famous result of Stra\ss en and following ideas of Doob, Kellerer showed in 1972 that
for a family of probability measures $\{\mu_t\}_{t\geq0}$ which is non-decreasing in the increasing convex 
order there always exists a submartingale with the corresponding marginals, see Thm.\ 3 in \cite{Kellerer}.
Therefore, the non-increasing family $\{\mathcal{L}(\eta_t(v))\}_{t\geq0}$ can be interpreted as the marginal
distributions of a supermartingale $(X_t)_{t\geq0}$.
As the mean of these distributions is constant, which follows from Lemma \ref{avpre} as mentioned in the
above remark and corresponds to the stronger condition of non-increasing ordering w.r.t.\ $\leq_{\text{cx}}$,
$(X_t)_{t\geq0}$ actually is a martingale.

Doob's martingale convergence theorem guarantees a random
variable $X_\infty$ such that $(X_t)_{t\geq0}$ converges to $X_\infty$ almost surely, hence in distribution.
Writing $\eta_\infty$ instead of $X_\infty$ establishes the claim.
\end{proof}

\section{On the infinite cluster of supercritical bond percolation}

In this section we consider the Deffuant opinion dynamics on the random subgraph
of $\Z^d$, $d\geq2$, which is formed by supercritical i.i.d.\ bond percolation, independent of the initial
configuration and the Poisson processes determining the times of potential opinion updates. 

That means, each edge of the grid is independently chosen to be open with a fixed probability $p\in(0,1]$.
One of the classical results in percolation theory tells us that for $d\geq2$, there exists a critical
value $p_c(d)\in(0,1)$ for $p$ above which we will a.s.\ find an infinite cluster and that this cluster is a.s.\ 
unique.
The common notation for the event that some vertex $v$ sits in the infinite cluster is $\{v\leftrightarrow
\infty\}$. Slightly abusing this notation we will write $\{e\leftrightarrow\infty\}$ for the event that
the edge $e$ is part of the infinite cluster.

The fact that ergodicity, one essential element to derive the results from the foregoing section,
is preserved when we consider the (random) subgraph of $\Z^d$ formed by i.i.d.\ bond percolation
allows for an immediate transfer of the corresponding results for the whole grid.

\begin{lemma}\label{perc}
Let the Deffuant model with initial values drawn from a distribution on $[0,b]$ and
parameter $\theta\in(0,b]$ be as above, but now take place on the graph of a supercritical i.i.d.\
bond percolation on $\Z^d$ which is independent of the initial configuration and the Poisson processes.
Then the lemmas of the foregoing section extend as follows:
\begin{enumerate}[(a)]
\item $\E[W_t^{\text{\upshape tot}}(v)\,|\,v\leftrightarrow\infty]=\E[W_0(\mathbf{0})]$
\item Given the edge $\langle u,v\rangle$ is open, we get as in Lemma \ref{asbeh} that a.s.\\
$|\eta_t(u)-\eta_t(v)|>\theta$ for sufficiently large $t$ or $\lim_{t\to\infty}|\eta_t(u)-\eta_t(v)|=0$.
\item The probability that some edges of the infinite cluster will be finally blocked in the Deffuant
model is either $0$ or $1$.
\end{enumerate}
\end{lemma}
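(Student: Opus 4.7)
The plan is to leverage the fact that the joint system---initial opinions, edge Poisson processes, and percolation configuration---is still stationary and ergodic under the $\Z^d$-shifts $T_i$, since its three components are independent and each is individually so. Once this is in hand, each of (a), (b), (c) reduces to adapting the corresponding argument from the preceding subsection with the appropriate indicator $\mathbbm{1}_{\{v\leftrightarrow\infty\}}$ inserted. The dynamics on closed edges are trivial: no update ever occurs across a closed edge, so $W_t^{\text{loss}}(e)=0$ for such $e$ and the formula (\ref{tot}) can be used verbatim.

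For (a), I would apply Zygmund's ergodic theorem to the stationary field $\{W_t^{\text{tot}}(v)\,\mathbbm{1}_{\{v\leftrightarrow\infty\}}\}_{v\in\Z^d}$ along a subsequence $(\Lambda_{n_k})$ with summable boundary-to-volume ratio, yielding almost sure convergence of the spatial average to $\E[W_t^{\text{tot}}(\mathbf{0})\,\mathbbm{1}_{\{\mathbf{0}\leftrightarrow\infty\}}]$. The Chebyshev plus Borel--Cantelli flux argument of Lemma \ref{avpre} carries over unchanged, because the number of Poisson firings on open edges in $\partial_E\Lambda_{n_k}$ is dominated by the total number on $\partial_E\Lambda_{n_k}$. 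Hence the spatial average is constant in $t$. At $t=0$, independence of the initial configuration from the percolation gives $\E[W_0(\mathbf{0})\,\mathbbm{1}_{\{\mathbf{0}\leftrightarrow\infty\}}]=\E[W_0(\mathbf{0})]\,\Prob(\mathbf{0}\leftrightarrow\infty)$, and dividing by the supercritical quantity $\Prob(\mathbf{0}\leftrightarrow\infty)>0$ completes the proof.

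For (b), I would reprove Lemma \ref{asbeh} while conditioning on $\{\langle u,v\rangle\text{ is open}\}$. The degree of every vertex is still at most $2d$, so the conditional probability that the next Poisson firing incident to $\{u,v\}$ lands on $\langle u,v\rangle$ itself is uniformly bounded below by $\tfrac{1}{4d-1}$; the extended conditional Borel--Cantelli lemma then applies as before, and the finite bound on $\E[W_t^{\text{loss}}(\langle u,v\rangle)\mid u\leftrightarrow\infty]$ furnished by (a) rules out divergence, yielding the dichotomy. For (c), I would let $N(v)$ count open edges incident to $v$ that are finally blocked; the process $\{N(v)\}_{v\in\Z^d}$ is stationary and ergodic under $T_i$, so the same union bound plus Zygmund averaging as in Lemma \ref{0-1} gives the $0$--$1$ alternative.

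The step I expect to need the most care is the book-keeping in (a): the indicator $\mathbbm{1}_{\{v\leftrightarrow\infty\}}$ depends on the entire percolation configuration and is highly correlated across sites, so I would want to check that weighting the ergodic theorem by this indicator really commutes with the flux estimate on $\partial_E\Lambda_{n_k}$. Fortunately, the Chebyshev bound on the number of Poisson events on $\partial_E\Lambda_{n_k}$ is uniform in both the percolation configuration and the opinions, and the per-event energy change is bounded by $M$ irrespective of which endpoint lies in the infinite cluster, so no fundamentally new estimate is needed beyond inserting the indicator throughout the original argument.
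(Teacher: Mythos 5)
Your proposal is correct and follows essentially the same route as the paper: insert the indicator $\mathbbm{1}_{\{v\leftrightarrow\infty\}}$ into the ergodic averaging of Lemma \ref{avpre}, use independence of the percolation from the initial opinions at time $0$ and divide by $\Prob(\mathbf{0}\leftrightarrow\infty)>0$ for (a), note that $\tfrac{1}{4d-1}$ remains a valid lower bound on the conditional probability for (b), and rerun the Zygmund/union-bound dichotomy of Lemma \ref{0-1} with $N(v)$ counting open finally blocked edges for (c). No gaps.
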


\begin{proof}
\begin{enumerate}[(a)]
\item Using the notation from Lemma \ref{avpre} and its line of reasoning, it is obvious that
  the process $\{W^\text{tot}_t(v)\cdot\mathbbm{1}_{\{v\leftrightarrow\infty\}}\}_{v\in\Z^d}$ is ergodic
  with respect to shifts. Hence instead of (\ref{Birkhoff}) one has
  \begin{equation}
   \lim_{n\to\infty}\frac{1}{|\Lambda_n|}\sum_{v\in C_\infty\cap\Lambda_n}W^\text{tot}_t(v)
   =\E[W^\text{tot}_t(\mathbf{0})\cdot\mathbbm{1}_{\{\mathbf{0}\leftrightarrow\infty\}}] 
        \text{  a.s.,}
  \end{equation}
  where $C_\infty$ denotes the infinite percolation cluster. By the same argument as in the quoted lemma,
  the left-hand side is constant over time and we thus get
  \begin{eqnarray*}\Prob(v\leftrightarrow\infty)\cdot\E[W_t^{\text{\upshape tot}}(v)\,|\,v\leftrightarrow\infty]
   &=&\E[W_t^{\text{\upshape tot}}(v)\cdot\mathbbm{1}_{\{v\leftrightarrow\infty\}}]\\
   &=&\E[W_t^{\text{\upshape tot}}(\mathbf{0})\cdot\mathbbm{1}_{\{\mathbf{0}\leftrightarrow\infty\}}]\\
   &=&\E[W_0(\mathbf{0})\cdot\mathbbm{1}_{\{\mathbf{0}\leftrightarrow\infty\}}]\\
   &=&\Prob(\mathbf{0}\leftrightarrow\infty)\cdot\E[W_0(\mathbf{0})],
  \end{eqnarray*}
  using symmetry and independence. Dividing by the probability for percolation of a given vertex
  $\Prob(v\leftrightarrow\infty)$, which is non-zero for supercritical percolation, yields the claim.
\item To get the second statement one simply has to mimick Lemma \ref{asbeh}. The only things changing
  are that we have to condition on the event of $e=\langle u,v\rangle$ being open in the realization of the
  i.i.d.\ bond percolation and the probability at a given point in time that $e$ will be the next edge
  incident to either $u$ or $v$ where a Poisson event occurs is no longer precisely $\tfrac{1}{4d-1}$
  but bounded from below by the same value (since some of the other edges might be closed).
\item Following the proof of Lemma \ref{0-1}, let us consider the probability that some given edge $e$ is open,
  further belongs to the infinite percolation component and is finally blocked in the Deffuant dynamics. If
  $$p_{\text{\tiny block}}:=\Prob(e\leftrightarrow \infty, e\text{ finally blocked})=0,$$
  the union bound and part (b) guarantee that a.s.\ all neighbors in the infinite component will finally concur.
  If this probability is positive, however, and $N(v)$ denotes the number of edges incident to $v$, open in the
  realization of the i.i.d. bond percolation, that will get finally blocked in the Deffuant model, another
  application of Zygmund's Ergodic Theorem yields:
  \begin{equation*}
   \lim_{n\to\infty}\frac{1}{|\Lambda_n|}\sum_{v\in C_\infty\cap\Lambda_n}N(v)
   =\E\big[N(\mathbf 0)\cdot\mathbbm{1}_{\{\mathbf 0\leftrightarrow\infty\}}\big]
   =2d\cdot p_{\text{\tiny block}}>0\text{ a.s.}
  \end{equation*}
  Hence with probability 1, there will be (infinitely many) edges that belong to the infinite percolation 
  component and are finally blocked.
\end{enumerate}\vspace*{-0.1cm}
\end{proof}\vspace{-1em}\vspace*{0.1cm}

\noindent Having checked that these auxiliary results transfer appropriately to the setting of
supercritical percolation, the following equivalent to Theorem \ref{Zd} can be verified with the very same
reasoning as before:
\begin{theorem}\label{perc1}
Consider the Deffuant model on the subgraph of $\Z^d$, $d\geq2$, formed by an independent supercritical
i.i.d.\ bond percolation as described above.
\begin{enumerate}[(a)]
\item  If the initial values are distributed uniformly on $[0,1]$ and $\theta>\tfrac34$, a.s.\ 
       we will finally have weak consensus in the infinite percolation cluster, i.e.
       for all $u,v\in\Z^d$ given the event $\{u,v\leftrightarrow\infty\}$ we have
       $$\Prob\big(\lim_{t\to\infty}|\eta_t(u)-\eta_t(v)|=0\big)=1.$$       
\item  For general initial distributions on $[0,1]$, the range of $\theta$, where final consensus of
       the infinite cluster is guaranteed, is non-trivial, i.e.\ including values smaller than $1$,
       unless the initial values are concentrated on $0$ and $1$, taking on both values
       with positive probability.
\end{enumerate}
\end{theorem}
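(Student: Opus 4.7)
My plan is to transcribe the proof of Theorem \ref{Zd} essentially verbatim, using the percolation analogues in Lemma \ref{perc} in place of their $\Z^d$-counterparts.

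For part (a), I start from the dichotomy provided by Lemma \ref{perc}(c): with probability either $0$ or $1$, some open edge inside $C_\infty$ is finally blocked. If this probability is $0$, then Lemma \ref{perc}(b) together with a union bound over the countably many pairs of neighbors in $C_\infty$ directly yields weak consensus on the infinite cluster, and we are done.

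So assume that almost surely some open edge of $C_\infty$ is finally blocked. For any such edge $\langle u,v\rangle$, $|\eta_t(u)-\eta_t(v)|>\theta\geq\tfrac12$ forces both $\eta_t(u)$ and $\eta_t(v)$ into $[0,1-\theta)\cup(\theta,1]$ for all large $t$. Any $w\in C_\infty$ not incident to a finally blocked open edge finally concurs with every $C_\infty$-neighbor by Lemma \ref{perc}(b), so $\eta_t(w)$ asymptotically agrees with $\eta_t(w')$ for any $w'$ reachable from $w$ via a path of finally-concurring open edges in $C_\infty$. By almost-sure uniqueness and connectedness of the supercritical infinite cluster, every such $w$ can be linked in this way to some vertex incident to a finally blocked edge (otherwise the concurring component of $w$ would be closed under taking $C_\infty$-neighbors, hence equal to $C_\infty$, contradicting the assumed existence of blocked edges). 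Therefore
$$\liminf_{t\to\infty}|\eta_t(v)-\tfrac12|\geq\theta-\tfrac12\quad\text{a.s.\ on }\{v\leftrightarrow\infty\}.$$
Applying Lemma \ref{perc}(a) with the convex energy $\En(x)=|x-\tfrac12|$, Fatou's lemma, and the non-negativity of $W_t^{\text{loss}}$, one obtains
$$\E\bigl[\liminf_{t\to\infty}|\eta_t(v)-\tfrac12|\,\big|\,v\leftrightarrow\infty\bigr]\leq \E[W_0(\mathbf{0})]=\tfrac14,$$
which for $\theta>\tfrac34$ contradicts $\theta-\tfrac12>\tfrac14$. Hence finally blocked open edges in $C_\infty$ cannot occur with positive probability, and weak consensus on the infinite cluster holds.

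Part (b) is immediate from the same chain of implications with a general bounded marginal: the concluding inequality becomes $\theta-\tfrac12\leq\E[|\eta_0-\tfrac12|]$, so any $\theta>\E[|\eta_0-\tfrac12|]+\tfrac12$ suffices to rule out finally blocked edges in $C_\infty$. This bound is strictly less than $1$ precisely when $\Prob(\eta_0\in\{0,1\})<1$, matching the claim. The one step that genuinely needs extra care compared to the $\Z^d$ case is the propagation that extends the $\liminf$ bound from vertices incident to a finally blocked edge to \emph{every} vertex of $C_\infty$; the key input is the almost-sure uniqueness of the supercritical infinite cluster. Once that is in place, the remainder is a faithful translation of the proof of Theorem \ref{Zd}.
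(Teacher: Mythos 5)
Your proposal is correct and follows essentially the same route as the paper: the $0$--$1$ dichotomy from Lemma \ref{perc}(c), propagation of the bound $\liminf_{t\to\infty}|\eta_t(v)-\tfrac12|\geq\theta-\tfrac12$ to all of $C_\infty$ via connectedness and uniqueness of the infinite cluster, and then the energy argument with $\En(x)=|x-\tfrac12|$, conditional Fatou and Lemma \ref{perc}(a) to derive the contradiction for $\theta>\tfrac34$ (respectively $\theta>\E[|\eta_0-\tfrac12|]+\tfrac12$). The only difference is that you make explicit the propagation step that the paper leaves implicit by referring back to the argument for Theorem \ref{Zd}.
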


\begin{proof}
Given the event that $v\in\Z^d$ is in the infinite percolation cluster which contains (open) edges that
are finally blocked by the opinion dynamics we get as in (\ref{liminf})
\begin{equation*}
\liminf_{t\to\infty}|\eta_t(v)-\tfrac12|\geq \theta-\tfrac12\text{ a.s.}
\end{equation*}
Choosing again $\En: x\mapsto|x-\tfrac12|$ as energy function the above lemma and the conditional version
of Fatou's Lemma yield the following chain of inequalities:
   \begin{align*}
    \theta-\tfrac12&\leq\E\big[\liminf_{t\to\infty}|\eta_t(v)-\tfrac12|\;\big|\;v\leftrightarrow\infty\big]\\
    &\leq\liminf_{t\to\infty}\E\big[|\eta_t(v)-\tfrac12|\;\big|\;v\leftrightarrow\infty\big]\\
    &\leq\liminf_{t\to\infty} \E\big[W^\text{tot}_{t}(v)\;\big|\;v\leftrightarrow\infty\big]\\
    &= \E\big[W^\text{tot}_{0}(v)\big]=\E\big[|\eta_{0}(v)-\tfrac12|\big].
   \end{align*}
Consequently, for blocked edges to occur in the infinite percolation cluster we have to have
$\theta\leq\tfrac34$ in the standard case of {\upshape unif}$([0,1])$ initial opinion values and
 $\theta\leq\tfrac12+\E\big[|\eta_{0}(v)-\tfrac12|\big]$ in the general case.
\end{proof}\vspace*{1em}

\noindent
So far, this seems like just a generalization of Section 3. In the percolation setting
however, a coupling argument allows to prove a result concerning the other end of the $\theta$-spectrum,
under slightly stronger conditions on the initial opinion configuration (see also Remark \ref{weakercon} below).

\begin{theorem}\label{perc2}
Consider again the Deffuant model on the infinite cluster of supercritical percolation, this time with i.i.d.\
initial opinion values distributed on $[0,1]$, s.t. $[0,1]$ is the minimal closed interval containing the
support of the marginal distribution. In addition, we require the percolation parameter $p$ to be less
than 1.

For $\theta<\tfrac12$ the probability that the opinion dynamics approach strong consensus on the infinite
percolation cluster is 0. 
\end{theorem}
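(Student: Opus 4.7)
The plan is to show that, with positive probability, an open edge in the infinite cluster is finally blocked, which by Lemma~\ref{perc}(c) forces such blocked edges to occur almost surely, precluding even weak consensus. By the symmetry $\eta_0\mapsto 1-\eta_0$, I may assume $\E\eta_0\geq 1/2$, so in particular $\E\eta_0>\theta$, and I fix $\epsilon>0$ so small that $\epsilon+\theta<\E\eta_0$. The essential role of $p<1$ is that, with positive probability, the percolation near the origin exhibits the following pendant configuration: $\mathbf{0}\in C_\infty$ has exactly one open edge $\langle\mathbf{0},w\rangle$ (with $w:=e_1$), and $w$ is attached to the infinite cluster through its other open edges. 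Combining with the independent event $\eta_0(\mathbf{0})<\epsilon$, which has positive probability since $0$ lies in the closure of $\supp\mathcal{L}(\eta_0)$, gives a base event $F$ of positive probability.

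Suppose, for contradiction, that strong consensus occurs with positive probability; by Lemma~\ref{perc}(c) this forces strong consensus to occur almost surely. In particular $\eta_t(w)\to\E\eta_0>\epsilon+\theta$ almost surely, so the random time $T_0:=\inf\{t\geq 0:\eta_s(w)>\epsilon+\theta \text{ for all } s\geq t\}$ is finite a.s. I couple the original dynamics with a \emph{censored} version, obtained by suppressing every Poisson event on $\langle\mathbf{0},w\rangle$: in the censored process $\mathbf{0}$ is isolated, and the opinion $\eta^c_t(w)$ describes how $w$ evolves with $\mathbf{0}$'s opinion frozen at $\eta_0(\mathbf{0})$. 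Let $\mathcal{F}$ be the $\sigma$-algebra generated by the percolation, the initial opinions, and all Poisson processes except the one at $\langle\mathbf{0},w\rangle$, and let $T_0^c$ be the $\mathcal{F}$-measurable analog of $T_0$ for the censored process. On the event $G:=\{\text{no Poisson event at }\langle\mathbf{0},w\rangle\text{ in }[0,T_0^c]\}$, the original and censored dynamics agree throughout $[0,T_0^c]$; for $t>T_0^c$, any subsequent Poisson event on $\langle\mathbf{0},w\rangle$ is automatically blocked because the gap $\eta_t(w)-\eta_t(\mathbf{0})>\theta$ persists, so the two dynamics remain coupled for all time, $\eta_t(\mathbf{0})\equiv\eta_0(\mathbf{0})<\epsilon$ forever, and the edge $\langle\mathbf{0},w\rangle$ is finally blocked---contradicting strong consensus.

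The main obstacle is to show that $F\cap G$ has positive probability. Conditional on $\mathcal{F}$, the Poisson process on $\langle\mathbf{0},w\rangle$ is an independent rate-one Poisson, so $\Prob(G\mid\mathcal{F})=e^{-T_0^c}$ on $\{T_0^c<\infty\}$, and hence $\Prob(F\cap G)=\E[e^{-T_0^c}\mathbf{1}_F\mathbf{1}_{\{T_0^c<\infty\}}]$. The delicate step is thus to show $T_0^c<\infty$ on a subset of $F$ of positive probability, i.e.\ to transfer the "eventually above $\epsilon+\theta$" property of $\eta_t(w)$ from the original to the censored dynamics. The natural route is Lemma~\ref{perc}(a) applied to the censored cluster $C_\infty\setminus\{\mathbf{0}\}$---still infinite, connected (since $\mathbf{0}$ is a pendant of $C_\infty$), and ergodic with respect to shifts---combined with the observation that forbidding updates at the single edge $\langle\mathbf{0},w\rangle$ perturbs $\eta_t(w)$ only by a cumulatively bounded amount relative to the original process, so that the asymptotic behavior of $\eta^c_t(w)$ inherits that of $\eta_t(w)$. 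This yields the positive probability of a finally blocked open edge, and hence the desired contradiction via Lemma~\ref{perc}(c).
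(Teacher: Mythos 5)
Your overall architecture matches the paper's: use $p<1$ to produce, with positive probability, a pendant vertex $\mathbf{0}$ whose single open edge leads to a vertex $w$ of the infinite cluster; give $\mathbf{0}$ an extreme initial opinion; use the assumed strong consensus to force $\eta_t(w)$ to settle near $\E\eta_0$, far from $\eta_t(\mathbf{0})$; conclude that the open edge $\langle\mathbf{0},w\rangle$ is finally blocked with positive probability, contradicting the $0$--$1$ law of Lemma~\ref{perc}(c). However, there is a genuine gap exactly at the step you yourself flag as delicate: you need the \emph{censored} dynamics to satisfy $\eta^c_t(w)\to\E\eta_0$ (or at least eventually exceed $\epsilon+\theta$) on a positive-probability event, and neither of your proposed justifications delivers this. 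Lemma~\ref{perc}(a) is only a conservation law for the \emph{expected} total energy; it does not yield convergence of individual opinions, let alone strong consensus for the censored system. And the claim that suppressing the Poisson events on one edge perturbs $\eta_t(w)$ ``only by a cumulatively bounded amount'' so that ``the asymptotic behavior of $\eta^c_t(w)$ inherits that of $\eta_t(w)$'' is unsubstantiated: a single altered interaction can propagate through the cluster and change the entire subsequent evolution, and even a genuinely bounded cumulative perturbation would not preserve the limit, which is what you need.

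The way to close this gap --- and it is precisely what the paper does --- is to realize the censored dynamics as the \emph{honest} Deffuant dynamics on a second, coupled i.i.d.\ percolation configuration that agrees with the first everywhere except that the edge $e=\langle\mathbf{0},w\rangle$ is closed (couple the two copies through a single uniform variable so that, with probability $\min\{p,1-p\}>0$, $e$ is closed in copy~1 and open in copy~2). Since copy~1 is itself a legitimate supercritical i.i.d.\ percolation, the a.s.\ strong consensus hypothesis applies to it \emph{directly} and gives $\eta_t(w)\to\E\eta_0$ there, with no perturbation argument needed; the two dynamics then coincide for all time on the event that no Poisson event occurs at $e$ before the (copy-1) settling time, which has positive conditional probability by the memorylessness you already invoke. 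In short: do not compare the censored process to the original process on the \emph{same} percolation configuration; compare the original process on two \emph{different} configurations, one of which makes the censoring automatic. With that replacement your argument becomes the paper's proof.
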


\begin{proof}
The line of reasoning to prove this statement is by contradiction. Assuming strong consensus for some fixed
value of $\theta$ in $(0,\tfrac12)$, we are going to show that there will be finally blocked edges in the
infinite percolation component with positive probability. This contradicts part (c) of Lemma \ref{perc}.

To that end let us consider two coupled copies of the supercritical i.i.d.\ bond percolation, see Figure
\ref{coupling}. Fix an edge
$e=\langle u,v\rangle$ and let the two copies coincide on $E(\Z^d)\setminus\{e\}$. Let $p\in(0,1)$ denote
the probability for an edge to be open in the percolation model and $A$ be the event that the edges incident
to $u$ other than $e$ are closed and $v$ sits in the infinite component. By a coupling argument
using local modification it can easily be seen that this event has positive probability if $p$ is supercritical.
\vspace{0.3cm}  
\begin{figure}[h]
     \centering
     \unitlength=0.5mm
     \begin{picture}(200.00,110.00)
          \put(0.00,20.00){\line(1,0){8.50}}
          \put(11.50,20.00){\line(1,0){17.00}}
          \put(51.50,20.00){\line(1,0){17.00}}
          \put(71.50,20.00){\line(1,0){8.50}}
          \put(31.50,40.00){\line(1,0){17.00}}
          \put(51.50,40.00){\line(1,0){17.00}}
          \put(0.00,60.00){\line(1,0){8.50}}
          \put(51.50,60.00){\line(1,0){17.00}}
          \put(0.00,80.00){\line(1,0){8.50}}
          \put(31.50,80.00){\line(1,0){17.00}}
          \put(71.50,80.00){\line(1,0){8.50}}
          \put(11.50,100.00){\line(1,0){17.00}}
          \put(31.50,100.00){\line(1,0){17.00}}
          \put(10.00,21.50){\line(0,1){17.00}}
          \put(10.00,41.50){\line(0,1){17.00}}
          \put(10.00,101.50){\line(0,1){8.50}}
          \put(30.00,10.00){\line(0,1){8.50}}
          \put(30.00,81.50){\line(0,1){17.00}}
          \put(50.00,21.50){\line(0,1){17.00}}
          \put(50.00,61.50){\line(0,1){17.00}}
          \put(70.00,10.00){\line(0,1){8.50}}
          \put(70.00,41.50){\line(0,1){17.00}}
          \put(70.00,81.50){\line(0,1){17.00}}
          \put(70.00,101.50){\line(0,1){8.50}}
           
          \multiput(10,20)(0,20){5}{\circle{3.00}}
          \multiput(30,20)(0,20){5}{\circle{3.00}}
          \multiput(50,20)(0,20){5}{\circle{3.00}}
          \multiput(70,20)(0,20){5}{\circle{3.00}}
          
          \put(23,62){$u$} \put(53,62){$v$}

          \put(120.00,20.00){\line(1,0){8.50}}
          \put(131.50,20.00){\line(1,0){17.00}}
          \put(171.50,20.00){\line(1,0){17.00}}
          \put(191.50,20.00){\line(1,0){8.50}}
          \put(151.50,40.00){\line(1,0){17.00}}
          \put(171.50,40.00){\line(1,0){17.00}}
          \put(120.00,60.00){\line(1,0){8.50}}
          \put(171.50,60.00){\line(1,0){17.00}}
          \put(120.00,80.00){\line(1,0){8.50}}
          \put(151.50,80.00){\line(1,0){17.00}}
          \put(191.50,80.00){\line(1,0){8.50}}
          \put(131.50,100.00){\line(1,0){17.00}}
          \put(151.50,100.00){\line(1,0){17.00}}
          \put(130.00,21.50){\line(0,1){17.00}}
          \put(130.00,41.50){\line(0,1){17.00}}
          \put(130.00,101.50){\line(0,1){8.50}}
          \put(150.00,10.00){\line(0,1){8.50}}
          \put(150.00,81.50){\line(0,1){17.00}}
          \put(170.00,21.50){\line(0,1){17.00}}
          \put(170.00,61.50){\line(0,1){17.00}}
          \put(190.00,10.00){\line(0,1){8.50}}
          \put(190.00,41.50){\line(0,1){17.00}}
          \put(190.00,81.50){\line(0,1){17.00}}
          \put(190.00,101.50){\line(0,1){8.50}}
          
          \multiput(130,20)(0,20){5}{\circle{3.00}}
          \multiput(150,20)(0,20){5}{\circle{3.00}}
          \multiput(170,20)(0,20){5}{\circle{3.00}}
          \multiput(190,20)(0,20){5}{\circle{3.00}}
          
          \put(143,62){$u$} \put(173,62){$v$}
                 
          {\color[rgb]{0,0,1} \put(151.50,60.00){\line(1,0){17.00}}
          \put(158,55){$e$}}
          \put(29.5,0){Copy 1} \put(149.5,0){Copy 2}
     \end{picture}
     
     \caption{Two appropriately coupled copies of the same i.i.d.\ percolation process
     on $\Z^d$ on which the opinion dynamics procedure takes place. \label{coupling}}
\end{figure}
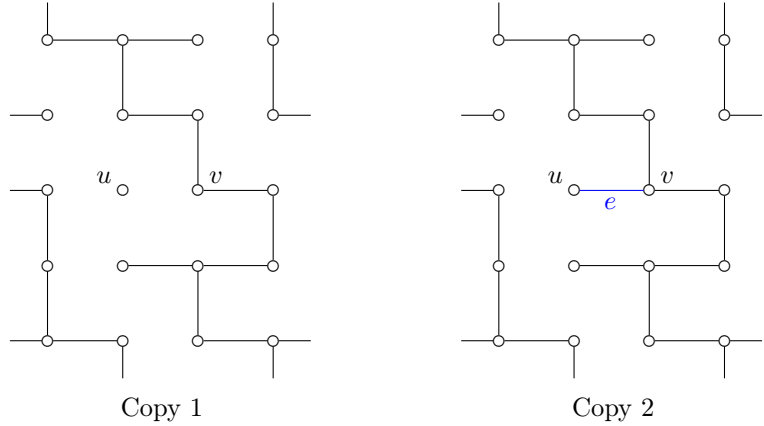\vspace*{-0.1cm}

\noindent
Now we want to couple the two copies in such a way that with positive probability $e$ is closed in copy 1
and open in copy 2 under the event $A$. Let $U$ be a $\text{unif}([0,1])$-distributed random variable, independent
of the percolation process on $E(\Z^d)\setminus\{e\}$. Declare $e$ to be open in copy 1 if $U<p$, closed
otherwise, and open in copy 2 if $U>1-p$ and closed otherwise. This defines two proper i.i.d. bond percolation
processes.

If $B$ denotes the event that $e$ is closed in copy 1 and open in copy 2, we get $\Prob(B)=\min\{p, 1-p\}>0$.
By independence we also have that the event $A\cap B$ has positive probability.

Since the event that there is strong consensus on the infinite percolation cluster is
ergodic with respect to shifts, it is a 0-1-event. Due to the assumption it must have
probability 1. Define $\delta:=\tfrac12-\theta$, which is positive.

Let us now restrict our attention to the event $A\cap B$ and the first copy. Since $v$ lies in the
infinite component, there is a time $T<\infty$ s.t. 
\begin{equation}\label{strongcon}
\Prob\big(|\eta_t(v)-\E\eta_0|<\tfrac{\delta}{2}\text{ for all }t\geq T\;|\;A\cap B)>0.
\end{equation}
Note that given $A\cap B$, in copy 1 the process $(\eta_t(v))_{t\geq0}$ is independent of $\eta_0(u)$
as well as the Poisson process attributed to $e$. By the choice of $\theta$ and the properties
of the initial distribution we get in addition:
\begin{equation*}
\Prob\big(\eta_0(u)\notin[\E\eta_0-(\theta+\tfrac{\delta}{2}),\E\eta_0+(\theta+\tfrac{\delta}{2})]\big)>0.
\end{equation*}
If we finally define $C$ to be the event that $A\cap B$ occurs, no Poission event occurs at
$e$ before $T$, $|\eta_t(v)-\E\eta_0|<\tfrac{\delta}{2}\text{ for all }t\geq T$ and $|\eta_0(u)-\E\eta_0|\geq\theta+\tfrac{\delta}{2}$, independence of the latter events conditioned
on $A\cap B$ makes sure that $C$ occurs with positive probability.\vspace{1em}

\noindent If we run the opinion dynamics on both copies simultaneously it is obvious that they
behave identically as long as no Poisson event occurs for $e$. Given the event $C$ the values
at $u$ and $v$ are further than $\theta$ apart from time $T$ on. Hence, even in the
second copy, there will never be an interaction between the two since no Poisson event occurs at $e$
before time $T$. In other words, with probability at least $\Prob(C)>0$ there will be no consensus
in the infinite percolation cluster of the second copy, to which given $A\cap B$ both $u$ and $v$ belong.
Since both copies underly the same distribution, this contradicts the assumption that we have strong
consensus. It is worth noting that strictly weak consensus can not be excluded since the argument in
(\ref{strongcon}) does not hold for the weak case.
\end{proof}

\begin{remark}
The two results of Theorem \ref{perc1} and \ref{perc2} put together imply the following:
The Deffuant model on the infinite cluster, formed by supercritical i.i.d.\ bond percolation on $\Z^d$ with
non-trivial percolation parameter $p\in(p_c,1)$, featuring i.i.d.\ initial opinions having a non-degenerate
marginal distribution on $[0,1]$ -- in the sense that it attributes positive probability to $(0,1),\ [0,\epsilon)$
and $(1-\epsilon,1]$ for all $\epsilon>0$ -- either approaches weak consensus for all $\theta\in(0,1]$ or
there is a phase transition in this parameter.
\end{remark}

\begin{remark}\label{weakercon}
Similarly to the ideas in Subsection \ref{dep}, we can relax the strong condition of independence
when it comes to the initial opinion values and still receive the same result. In the proof of Theorem \ref{perc2},
the only instance where more than stationarity and ergodicity with respect to shifts of the initial configuration
$\{\eta_0(v)\}_{v\in\Z^d}$ was used is in the conclusion that the event $C$ has positive probability.
This however can also be guaranteed without the independence of initial opinion values, if only
$\{\eta_0(v)\}_{v\in\Z^d}$ additionally satisfies the finite energy condition as laid down in Definition
\ref{finen} but now with $\Z^d$ in place of $\Z$.
\end{remark}

\subsection*{Acknowledgement}
We want to thank two referees for a very careful reading of and valuable comments to an earlier draft
that helped us not only to clarify and straighten out the statement and proof of some of the results
but also to further explore their scope. In particular, the extensions in Subsection \ref{dep} to dependent
initial configurations and in Remark \ref{amenable} to amenable graphs were triggered by their questions.


\vspace{0.5cm}
\makebox[\textwidth][c]{
\begin{minipage}[t]{1.2\textwidth}
\begingroup
	\begin{minipage}[t]{0.5\textwidth}
	{\sc \small Olle Häggström\\
   Department of Mathematical Sciences,\\
   Chalmers University of Technology,\\
   412 96 Gothenburg, Sweden.}\\
   olleh@chalmers.se
	\end{minipage}
	\hfill
	\begin{minipage}[t]{0.5\textwidth}
	{\sc \small Timo Hirscher\\
   Department of Mathematical Sciences,\\
   Chalmers University of Technology,\\
   412 96 Gothenburg, Sweden.}\\
   hirscher@chalmers.se\\
	\end{minipage}
	\endgroup
\end{minipage}}

\end{document}